\newtheorem{prop}{Proposition}[section]
\newtheorem{coro}[prop]{Corollary}
\newtheorem{lem}[prop]{Lemma}
\newtheorem{rem}[prop]{Remark}
\newtheorem{defn}[prop]{Definition}
\newcommand\C{\mathbb{C}}
\newcommand\Sc{\mathcal{S}}
\renewcommand{\geq}{\geqslant}
\renewcommand{\leq}{\leqslant}
\def\D{\mathcal{D}}
\definecolor{darkgreen}{rgb}{0,0.4,0}
\definecolor{MyDarkBlue}{rgb}{0,0.08,0.50}
\definecolor{BrickRed}{rgb}{0.65,0.08,0}
\begin{document}

\author{Guy Fayolle\addressmark{1}\thanks{Email: \email{Guy.Fayolle@inria.fr}}\and%
        Kilian Raschel\addressmark{2}\thanks{Email: \email{Kilian.Raschel@lmpt.univ-tours.fr}}}
\title[Asymptotics in the counting of walks in the quarter plane]{Some exact asymptotics in the counting of walks in the quarter plane}
\address{\addressmark{1} INRIA Paris-Rocquencourt (France) \\
\addressmark{2}CNRS \& LMPT, Universit\'e Fran\c cois Rabelais  (France)}
\keywords{Random walk in the quarter plane, generating function, singularity analysis, boundary value problem.}

\received{30 January 2012}
\revised{\today}
\accepted{April 2012}
\maketitle
\begin{abstract}Enumeration of planar lattice walks is a classical topic in combinatorics, at the cross-roads of several domains (e.g., probability, statistical physics, computer science). The aim of this paper is to propose a new approach to obtain some exact asymptotics for walks confined to the quarter plane.
\end{abstract}
\emph{AMS $2000$ Subject Classification: primary 60G50; secondary 30F10, 30D05}

\clearpage
\section{Introduction}
Enumeration of planar lattice walks is a most classical topic in combinatorics. For a given set 
$\Sc$ of admissible steps (or jumps), it is a matter of counting the number of paths of a certain length, which start and end at some arbitrary points, and might even be restricted to some region of the plane.  Then three natural important questions arise.
\begin{enumerate}
     \item \label{question-howmany} How many such paths do exist?
     \item \label{question-asymptotic} What is the asymptotic behavior, as their length goes to infinity, of the number of walks ending at some given point or domain (for instance one axis)? 
     \item \label{question-nature} What is the nature of the generating function of the numbers of walks? Is it \emph{holonomic}\footnote{A function of several complex variables is said to be holonomic if the vector space over the field of rational functions spanned by the set of all derivatives is finite dimensional. In the case of one variable, this is tantamount to saying that the function is solution of a linear differential equation where the coefficients are rational functions (see \cite{FLAJ}).}, and, in that case, \emph{algebraic} or even 
     \emph{rational}?  
\end{enumerate}

If the paths are not restricted to a region, or if they are constrained to remain in a half-plane, it turns out \cite{MBM2} that the generating function has an explicit form (question \ref{question-howmany}), and is, respectively, rational or algebraic (question~\ref{question-nature}). Question \ref{question-asymptotic} can then be solved from the answer to \ref{question-howmany}.

The situation happens to be much richer if the walks are confined to the quarter plane $\mathbb{Z}_+^2$. As an illustration, let us recall that some walks admit an algebraic generating function, see \cite{Gessel}  for Kreweras' walk (see Figure \ref{ExExEx}), while others admit a generating function which is not even holonomic, see \cite{MBM2} for the so-called \emph{knight walk}. 

In the sequel, we focus on  walks confined to $\mathbb{Z}_+^2$, starting at the origin and having \emph{small steps}. This means exactly that  the set $\mathcal S$ of admissible steps is included in the  set of the eight nearest neighbors, i.e., $\mathcal S \subset \{-1, 0, 1\}^2 \setminus\{(0, 0)\}$. By using  an extended Kronecker's delta, we shall write
\begin{equation}
\label{def_deltaij}
     \delta_{i,j}=\left\{\begin{array}{ccc}
     1 &\text{if} & (i,j)\in\mathcal S,\\
     0 &\text{if} & (i,j)\notin\mathcal S.
     \end{array}\right.
\end{equation}
On the boundary of the quarter plane, allowed jumps are the natural ones: steps that would take the walk out of $\mathbb{Z}_+^2$ are obviously discarded, see examples on Figure \ref{ExExEx}. 

\begin{figure}[t]
\begin{center}
\begin{picture}(415.00,68.00)
\includegraphics{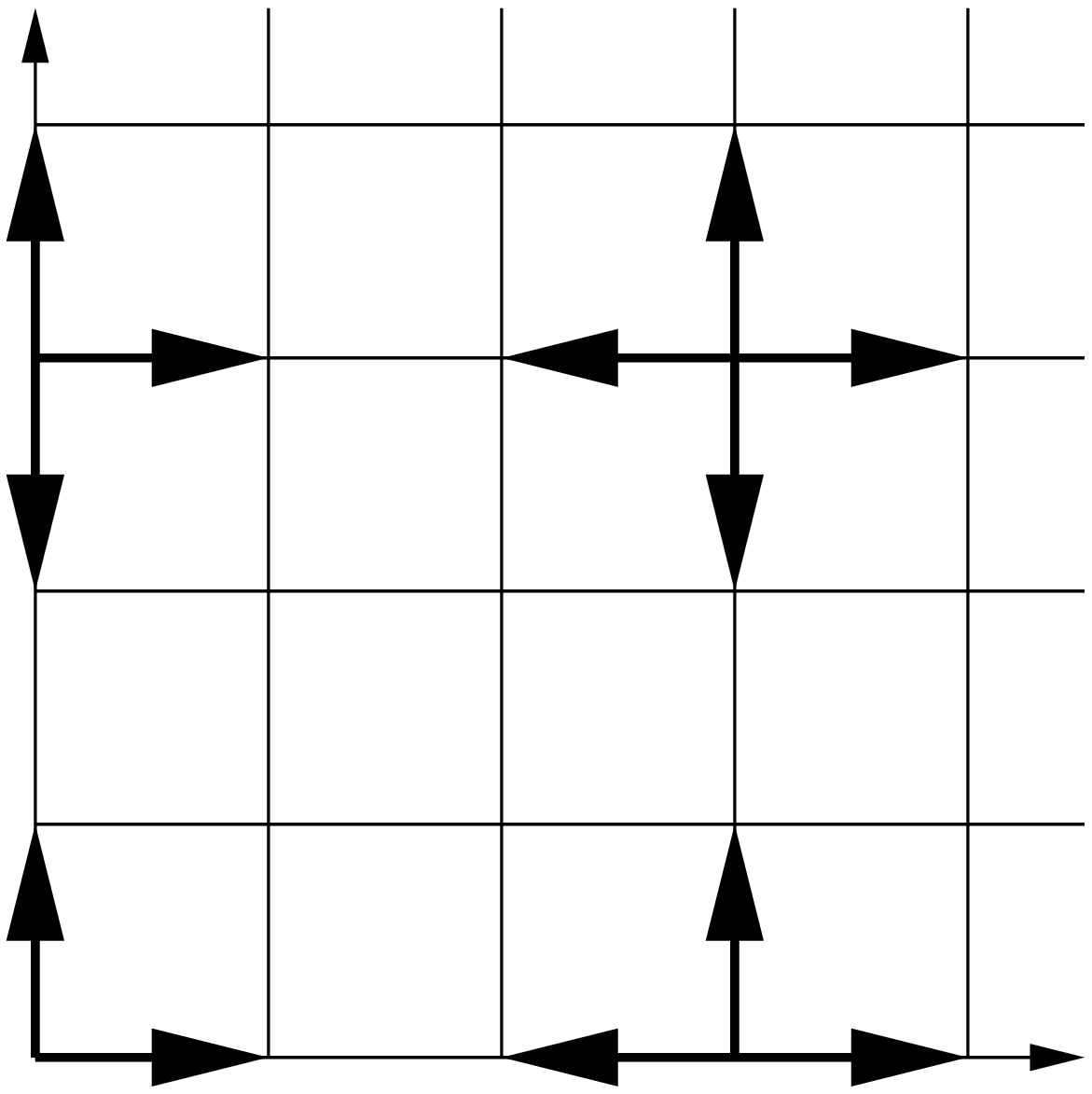}
\hspace{38mm}
\includegraphics{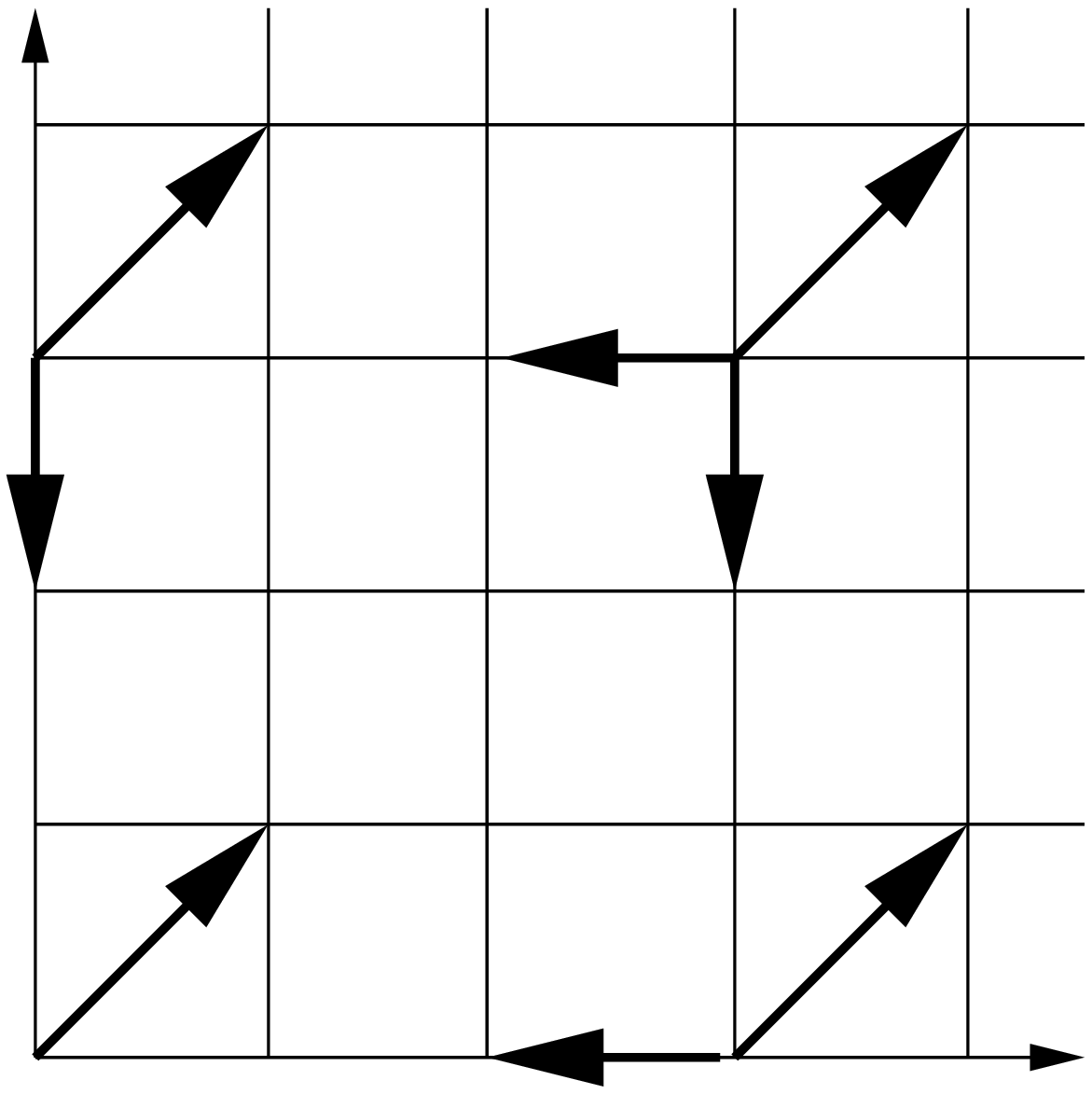}
\hspace{38mm}
\includegraphics{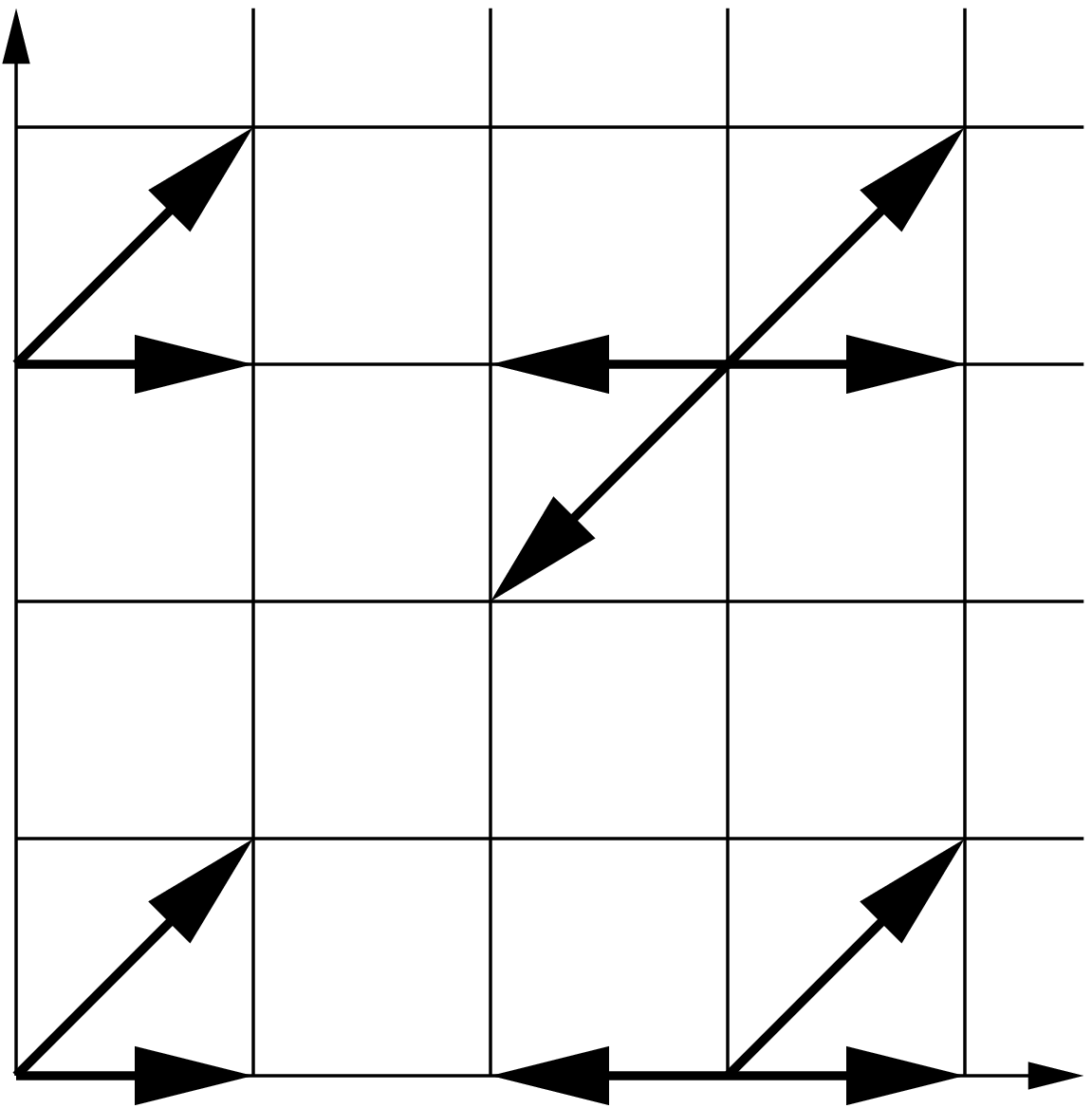}
\hspace{38mm}
\includegraphics{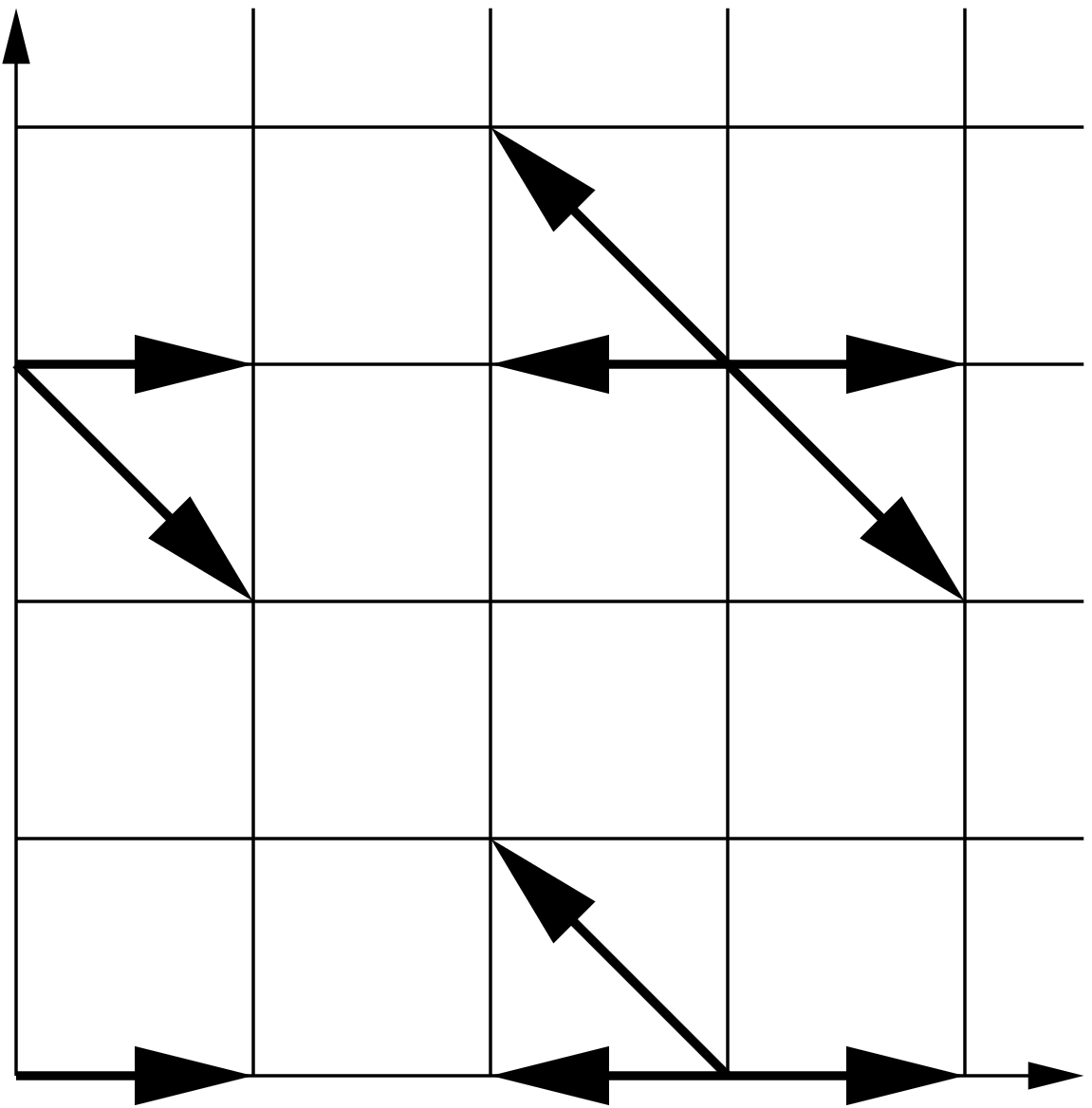}
\end{picture}
\end{center}
\caption{Four famous examples, known as the simple, Kreweras', Gessel's and Gouyou-Beauchamps' walks, respectively}
\label{ExExEx}
\end{figure}

There are $2^8$ such models. Starting from the existence of simple geometrical symmetries, Bousquet-M\'elou and Mishna \cite{BMM} have shown that there are in fact $79$ types of essentially distinct walks---we will often refer to these $79$ walks tabulated in \cite{BMM}. For each of these $79$ models,  $q(i,j,n)$ will denote \emph{the number of paths confined to $\mathbb{Z}_{+}^{2}$, starting at $(0,0)$ and ending at $(i,j)$ after $n$ steps}. The associated generating function will be written
\begin{equation} 
\label{eq:def_Q}
     Q(x,y,z)=\sum_{i,j,n\geq 0}q(i,j,n)x^{i}y^{j}z^{n}.
\end{equation}

Answers to questions \ref{question-howmany} and \ref{question-nature} have been recently determined for all $79$ models of walks. To present these results, we need to define a certain group, introduced in a probabilistic context in \cite{MAL}, and called the {\em group of the walk}. This group $W=\langle \Psi,\Phi\rangle$ of birational transformations in $\mathbb C^2$ leaves invariant the function $\sum_{-1\leq i,j\leq 1}\delta_{i,j}x^{i}y^{j}$, and has the following two generators:
     \begin{equation}
     \label{def_generators_group}
          \Psi(x,y)=\left(x,\frac{\sum_{-1\leq i\leq 1}\delta_{i,-1}x^{i}}
          {\sum_{-1\leq i\leq 1}\delta_{i,+1}x^{i}}\frac{1}{y}\right),
          \qquad \Phi(x,y)=\left(\frac{\sum_{-1\leq j\leq 1}\delta_{-1,j}y^{j}}
          {\sum_{-1\leq j\leq 1}\delta_{+1,j}y^{j}}\frac{1}{x},y\right).
     \end{equation}
Clearly $\Psi\circ\Psi=\Phi\circ\Phi=\text{id}$, and $W$, when it is finite, is a dihedral group, whose order is even and at least four. The order of $W$ is calculated in \cite{BMM} for each of the $79$ cases: $23$ walks admit a finite group (then of order $4,6$ or $8$), and the $56$ others have an infinite group.

The expression of the generating function $Q(x,y,z)$---or equivalently the answer to \ref{question-howmany}---for the $23$ walks with a finite group has been determined in \cite{BK0,BMM}, mainly by algebraic manipulations, starting from the so-called \emph{kernel} method in the two-dimensional case (see, e.g., \cite{FIM} and references therein).

As for the $56$ models with an infinite group,  the function $Q(x, y, z)$ related to the $5$ 
\emph{singular} walks (i.e., walks having no jumps to the West, South-West and South) was found in \cite{MM} by using elementary manipulations on \eqref{counting_func_eq}. For the remaining  $51$ non-singular walks, $Q(x, y, z)$ was found in \cite{Ra} by the method, initiated in \cite{FI,FIM}, of reduction to boundary value problems (BVP). 

Concerning \ref{question-nature}, the answer was obtained in \cite{BK0,BMM,FR1} for the $23$ walks with a finite group: the function $Q(x,y,z)$ is always holonomic, and even algebraic when 
$\sum_{-1\leq i,j\leq1}i j\delta_{i,j}$ is positive.  For the $56$ models with an infinite group, it was also proved that they all admit a non-holonomic generating function: see \cite{MM} for the $5$ singular walks, and \cite{KuRa2} for the non-singular ones.

Let us now focus on question \ref{question-asymptotic}, which actually is the main topic of this paper. 
A priori, it has a link with question \ref{question-howmany} (indeed, being provided with an expression for the generating function, one should hopefully be able to deduce asymptotics of its coefficients), and with \ref{question-nature} as well (since holonomy is strongly related to singularities, and hence to the asymptotics of the coefficients). As we shall see, the situation is not so simple, notably because the expression of the function \eqref{eq:def_Q} is fairly complicated to deal with, in particular when it is given under an integral form, which is a frequent situation in concrete case studies.

If we restrict question \ref{question-asymptotic} to the computation of asymptotic estimates for  the number of walks ending at the origin $(0,0)$ (these numbers are the coefficients of $Q(0,0,z)$, see \eqref{eq:def_Q}), a very recent paper \cite{DW} solved the problem for much more general random walks in $\mathbb{R}^d$, for any $d\ge1$: indeed, the authors obtain the leading term of the estimate, up to a coefficient involving two unknown harmonic functions. On the other hand, it is impossible to deduce from \cite{DW} the asymptotic behavior of the number of walks ending, for instance, at one axis (coefficients of $Q(1,0,z)$ and $Q(0,1,z)$), or anywhere (coefficients of $Q(1,1,z)$). 

Still with respect to question \ref{question-asymptotic}, it is worth mentioning  several interesting conjectures by Bostan and Kauers \cite{BK1,BK2}, which are proposed independently of the finiteness of the group.

The aim of this paper is to propose a uniform approach to answer question \ref{question-asymptotic}, which, as we shall see, works  for \emph{both} finite or infinite groups, and for walks not necessarily  restricted to excursions as in \cite{DW}.

First, Section \ref{sec:functional-equation} presents the basic functional equation,  which is the keystone of the analysis. Then, Section \ref{sec:simple-walk} illustrates our approach by considering the simple walk (the jumps of which are represented in Figure \ref{ExExEx}), which has a group of order $4$. In this case, we obtain an explicit expression of $Q(x,y,z)$,  allowing  to get exact asymptotics of the coefficients of $Q(0,0,z)$, $Q(1,0,z)$ (and by an obvious symmetry, also of $Q(0,1,z)$) and of $Q(1,1,z)$, by making a direct \emph{singularity analysis}. 
Section \ref{sec:generalization} sketches the main differences (in the analytic treatment) occurring between the simple walk and any of the $78$ other models. In particular, we explain the key phenomena leading to singularities. Indeed, a singularity is due to the fact that a certain Riemann surface sees its \emph{genus} passing from $1$ to $0$, while another singularity takes place at $z=1/\vert\Sc\vert$, where $|\Sc|$ denotes the number of possible steps of the model.  Also, as in the probabilistic context (although this will not be shown in this paper), the orientation of the \emph{drift vector} has a clear  impact on the nature of the singularities which lead to the asymptotics of interest.


To conclude (!) this introduction, let us mention that our work has clearly direct connections with probability theory. Indeed, with regard to random walks evolving in the quarter plane, and more generally in cones, the question of the asymptotic tail distribution of the first hitting time of the boundary has been for many decades of great interest. The approach proposed in this paper should hopefully lead to a complete solution of this problem for random walks with jumps to the eight nearest neighbors.

\section{The basic functional equation} \label{sec:functional-equation}
A  common starting point to study the $79$ walks presented in the introduction is to establish the following functional equation (proved in \cite{BMM}),  satisfied by the generating function defined in \eqref{eq:def_Q}:
     \begin{equation}
     \label{counting_func_eq}
          K(x,y,z)Q(x,y,z) = c(x)Q(x,0,z) + \widetilde{c}(y)Q(0,y,z)-\delta_{-1,-1} Q(0,0,z)-{xy}/{z},
     \end{equation}
where $c$ and $\widetilde c$ are defined in \eqref{def_abc} and 
\begin{equation} \label{def_kernel}
     K(x,y,z) = xy\textstyle[\sum_{-1\leq i,j \leq 1} \delta_{i,j}x^{i}y^{j}-{1}/{z}].
\end{equation}
Equation \eqref{counting_func_eq} holds at least in the region $\{|x|\leq 1,|y|\leq 1, |z|<1/|\Sc|\}$, since obviously $q(i,j,n)\leq {|\Sc|}^{n}$. In \eqref{counting_func_eq}, we note that the $\delta_{i,j}$'s defined in \eqref{def_deltaij} play the role (up to a normalizing condition)  of the usual transition probabilities $p_{i,j}$'s in a probabilistic context. 

Our main goal is to analyze the dependency of $Q(x,y,z)$ with respect to the time variable $z$, which in fact plays merely the role of a parameter, as far as the functional equation \eqref{counting_func_eq} is concerned. Remark that $Q(x,y,z)$ has real positive coefficients in its power series expansion. Thus, letting $R$ denote the radius of convergence, say of $Q(1,0,z)$, we know from Pringsheim's theorem (see \cite{TIT}) that $z=R$ is a singular point of this function. 

The quantity \eqref{def_kernel} that appears in \eqref{counting_func_eq} is called the {\em kernel} of the walk. It can be rewritten as 
     \begin{equation*}
          \widetilde{a}(y)x^{2}+[\widetilde{b}(y)-y/z]x+\widetilde{c}(y)=a(x)y^{2}+[b(x)-x/z]y+c(x),
     \end{equation*}
where 
     \begin{equation}
     \label{def_abc}
     \left.\begin{array}{lllllllll}
          \widetilde{a}(y)=\sum_{-1\leq j\leq 1}\delta_{1,j}y^{j+1},
&\widetilde{b}(y)=\sum_{-1\leq j\leq 1}\delta_{0,j}y^{j+1},
&\widetilde{c}(y)=\sum_{-1\leq j\leq 1}\delta_{-1,j}y^{j+1},\\
a(x)=\sum_{-1\leq i\leq 1}\delta_{i,1}x^{i+1},
&  b(x)=\sum_{-1\leq i\leq 1}\delta_{i,0}x^{i+1},
&c(x)=\sum_{-1\leq i\leq 1}\delta_{i,-1}x^{i+1}.
\end{array}\right.
\end{equation}
Let us also introduce the two discriminants of the kernel, in the respective $\C_x$ and $\C_y$ complex planes,
     \begin{equation}
     \label{def_d}
          \widetilde{d}(y,z)=[\widetilde{b}(y)-y/z]^{2}-4\widetilde{a}(y)\widetilde{c}(y),
          \qquad d(x,z)=[b(x)-x/z]^{2}-4a(x)c(x). 
     \end{equation}

From now on, we shall take  $z$ to be a \emph{real} variable. Indeed this is in no way a restriction, as it will emerge from analytic continuation arguments.

For $z\in(0,1/\vert \mathcal S\vert)$, the polynomial $d$ (resp.\ $\widetilde{d}$) has four roots (one at most being possibly infinite) satisfying in the $x$-plane (resp.\ $y$-plane)
\begin{equation}
\label{eq:bp1}
     |x_{1}(z)|<x_{2}(z)<1<x_{3}(z)<|x_{4}(z)|\leq \infty,
     \quad
     |y_{1}(z)|<y_{2}(z)<1<y_{3}(z)<|y_{4}(z)|\leq \infty,
\end{equation} 
as shown in \cite{Ra}. Then consider the algebraic functions $X(y,z)$ and $Y(x,z)$ defined by 
\begin{equation*}
     K(X(y,z),y,z) = K(x, Y(x,z),z) =0.
\end{equation*}
With the notations \eqref{def_abc} and \eqref{def_d}, we have
\begin{equation}
\label{expression_X_Y}
     X(y,z) = \frac{-\widetilde{b}(y)+\frac{y}{z}\pm\sqrt{\widetilde{d}(y,z)}}{2\widetilde a(y)},\qquad Y(x,z) = \frac{-b(x)+\frac{x}{z}\pm \sqrt{d(x,z)}}{2a(x)}.
\end{equation}
From now on, we shall denote by $X_0,X_1$ (resp.\ $Y_0,Y_1$) the two branches of these algebraic functions defined in the $\C_y$ (resp. $\C_x$)  plane. They can be separated (see \cite{FIM}),  to ensure $\vert X_0\vert\leq \vert X_1\vert$ in $\C_y$ (resp.\ $\vert Y_0\vert \leq \vert Y_1\vert$ in $\C_x$), keeping in mind that  the variable $z$ merely plays the role of a parameter.

\section{The example of the simple walk} 
\label{sec:simple-walk}
To illustrate the method announced in the introduction, we consider in this section the \emph{simple} walk (i.e.,  $\delta_{i,j} = 1$ only for couples $(i,j)$ such that $ij=0$, see Figure \ref{ExExEx}). Then the  following main results hold.
\begin{prop}
\label{prop:exact_simple_walk}
For the simple walk,
\begin{equation}\label{eq:Q001}
     Q(0,0,z) =\frac{1}{\pi} \int_{-1}^1 \frac{  1-2uz-\sqrt{(1-2uz)^2-4z^2}}{z^2}\sqrt{1- u^2}\,\textnormal{d}u.
\end{equation}
\end{prop}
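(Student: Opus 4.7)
The plan is to use the orbit-sum (group) method, which is particularly efficient for the simple walk because its group $W$ has order four. One builds an alternating combination of the four orbit-elements of $xyQ(x,y,z)$ in which all boundary sections cancel, and then reads off $Q(0,0,z)$ as the coefficient of $xy$ in a simple rational fraction.

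First, I would instantiate \eqref{counting_func_eq} for the simple walk. Here $\delta_{-1,-1}=0$, $c(x)=x$, $\widetilde{c}(y)=y$ and $K(x,y,z)=xy[x+1/x+y+1/y-1/z]$, so the functional equation reads $KQ=xQ(x,0,z)+yQ(0,y,z)-xy/z$. The group $W$ is generated by $\Psi:(x,y)\mapsto(x,1/y)$ and $\Phi:(x,y)\mapsto(1/x,y)$; it leaves $K/(xy)$ invariant, with $K(1/x,y,z)=K(x,y,z)/x^{2}$ and symmetrically in $y$. Forming
\[
\mathcal{F}(x,y,z)\egaldef xyQ(x,y,z)-\frac{y}{x}Q(1/x,y,z)-\frac{x}{y}Q(x,1/y,z)+\frac{1}{xy}Q(1/x,1/y,z),
\]
substituting the functional equation at each of the four orbit points, and collecting, the four sections $Q(x,0,z)$, $Q(1/x,0,z)$, $Q(0,y,z)$, $Q(0,1/y,z)$ all telescope, leaving $K(x,y,z)\,\mathcal{F}(x,y,z)=-(x^{2}-1)(y^{2}-1)/z$, that is,
\[
\mathcal{F}(x,y,z)=\frac{(x-1/x)(y-1/y)}{1-z(x+1/x+y+1/y)}.
\]

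Next, I would extract $Q(0,0,z)$. In the formal Laurent expansion of $\mathcal{F}$, the summand $xyQ(x,y,z)$ is supported only on exponents $(i,j)$ with $i,j\geq 1$, while each of the three other summands has at least one strictly negative exponent in $x$ or $y$. Hence $[x^{1}y^{1}]\mathcal{F}=[x^{1}y^{1}](xyQ)=Q(0,0,z)$. For $|z|<1/|\mathcal{S}|=1/4$, the series $\sum_{n\geq 0}z^{n}(x+1/x+y+1/y)^{n}$ converges uniformly on the torus $|x|=|y|=1$, so this coefficient equals the double contour integral
\[
Q(0,0,z)=\frac{1}{(2\pi i)^{2}}\oint_{|x|=1}\oint_{|y|=1}\frac{(x-1/x)(y-1/y)}{1-z(x+1/x+y+1/y)}\frac{dx\,dy}{x^{2}y^{2}}.
\]

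Finally, parametrizing $x=e^{i\theta}$, $y=e^{i\phi}$ and using $x\pm 1/x=2\cos\theta$ or $2i\sin\theta$, the symmetry $(\theta,\phi)\mapsto(-\theta,-\phi)$ kills the imaginary part and the double integral collapses to
\[
Q(0,0,z)=\frac{4}{\pi^{2}}\int_{0}^{\pi}\!\int_{0}^{\pi}\frac{\sin^{2}\theta\,\sin^{2}\phi}{1-2z(\cos\theta+\cos\phi)}\,d\theta\,d\phi.
\]
Applying the classical identity $\int_{0}^{\pi}\sin^{2}\phi/(\alpha-\beta\cos\phi)\,d\phi=\pi[\alpha-\sqrt{\alpha^{2}-\beta^{2}}]/\beta^{2}$ with $\alpha=1-2z\cos\theta$, $\beta=2z$, and then the substitution $u=\cos\theta$, yields exactly \eqref{eq:Q001}. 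The main obstacle is the combinatorial bookkeeping of the orbit-sum step: one must verify that all section terms really cancel in $K\mathcal{F}$. The rest consists of standard Fourier and special-function manipulations, modulo the mild observation that the bound $q(i,j,n)\leq|\mathcal{S}|^{n}$ legitimates the passage from formal to analytic identities in the disk $|z|<1/4$ where the contour integral converges.
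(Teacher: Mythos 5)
Your proof is correct, but it follows a genuinely different route from the paper. You use the algebraic kernel (orbit-sum) method of Bousquet-M\'elou and Mishna: the alternating sum over the order-$4$ group does telescope as you claim (I checked that $K(x,y,z)\,\mathcal F(x,y,z)=-(x^{2}-1)(y^{2}-1)/z$, all four section terms cancelling in pairs), the positive-part argument correctly identifies $Q(0,0,z)$ as the coefficient of $x^{1}y^{1}$, and the bound $q(i,j,n)\leq \vert\Sc\vert^{n}$ indeed legitimates the passage to the double contour integral on the torus for $\vert z\vert<1/4$ (where the kernel does not vanish there, so the division by $K$ is harmless); the $\phi$-integration via the classical identity, valid since $1-2z\cos\theta>2z$ for $z\in(0,1/4)$, reproduces exactly $Y_0(e^{i\theta},z)/(2z)$ and hence \eqref{eq:Q001}. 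The paper instead reduces the problem to a boundary value problem on the unit circle: the boundary relation \eqref{eq:BVP-circle}, combined with Cauchy's formula and $\bar t=1/t$ on $\Gamma$, yields the single contour integral \eqref{eq:int-circle}, from which $Q(0,0,z)$ is obtained by letting $x\to 0$ and inserting the explicit branch $Y_0$ of \eqref{eq:Y0}. What each approach buys: your orbit-sum argument is shorter and more elementary for this particular model, but it relies on the group being finite with a non-vanishing orbit sum, so it does not extend to the $56$ infinite-group models (nor to cases where the orbit sum vanishes); the BVP route, which is the very point of the paper, produces integral representations such as \eqref{expression_Q(x,0,z)} uniformly for all $79$ models and is the form on which the subsequent singularity analysis of Section \ref{sec:singularities} is built. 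It is also worth noting (cf.\ Section \ref{sec:group-4}) that for the group of order $4$ the two methods are essentially reconciled, since the BVP is then set on a circle and the CGF $t+1/t$ recovers \eqref{eq:int-circle}.
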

Note that $Q(0,0,z)$ counts the number of excursions starting from $(0,0)$ and returning to $(0,0)$.
\begin{prop}
\label{prop:asymp-1_simple_walk}
For the simple walk, as $n\to\infty$, 
\begin{equation*}
     q(0,0,2n)\sim \frac{4}{\pi}\frac{16^{n}}{n^3}.
\end{equation*}
\end{prop}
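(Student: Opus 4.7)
The plan is to apply Flajolet--Odlyzko singularity analysis to the integral representation of Proposition~\ref{prop:exact_simple_walk}. First notice the symmetry: the change $(u,z)\mapsto(-u,-z)$ fixes the integrand of~\eqref{eq:Q001}, so $Q(0,0,z) = Q(0,0,-z)$, in line with the parity $q(0,0,2n+1)=0$. Factoring $(1-2uz)^2 - 4z^2 = (1-2z(u+1))(1-2z(u-1))$, one sees that the integrand is analytic in $z\in(-1/4,1/4)$ uniformly in $u\in[-1,1]$, whereas as $z\to 1/4^-$ the first factor vanishes at the endpoint $u=1$ (and symmetrically at $z=-1/4$, $u=-1$). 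These will be the dominant singularities.

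To analyse $Q(0,0,z)$ as $\alpha \egaldef 1-4z\to 0^+$, I would change variable to $u=\cos\theta$ and use the factorisation
\[
(1-2z\cos\theta)^2 - 4z^2 = (1-4z\cos^2(\theta/2))(1+4z\sin^2(\theta/2)).
\]
Near $\theta=0$, the first bracket equals $\alpha + z\theta^2 + O(\theta^4)$ while the second is $1+O(\theta^2)$ and $\sin^2\theta=\theta^2+O(\theta^4)$, so the singular behaviour of
\[
H(z)\egaldef\int_{-1}^1\sqrt{(1-2uz)^2-4z^2}\sqrt{1-u^2}\,du
\]
localises in a window $\theta\in[0,\varepsilon]$. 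The rescaling $\theta=\sqrt{\alpha/z}\,t$ turns this window into $\frac{\alpha^2}{z^{3/2}}\int_0^T t^2\sqrt{1+t^2}\,dt$ (up to $o(\alpha^2)$) with $T=\varepsilon\sqrt{z/\alpha}\to\infty$, and the elementary expansion
\[
\int_0^T t^2\sqrt{1+t^2}\,dt = \frac{T^4}{4}+\frac{T^2}{4}-\frac{\log T}{8}+C+O(T^{-2})
\]
shows that the polynomial-in-$T$ terms cancel the $\varepsilon$-dependence of the bulk integral $\int_\varepsilon^\pi$ (which is analytic in $z$ near $1/4$), leaving the local singular expansion
\[
H(z)=H_{\mathrm{reg}}(z)+\frac{1}{2}(1-4z)^2\log(1-4z)+O\bigl((1-4z)^3\log(1-4z)\bigr),
\]
with $H_{\mathrm{reg}}$ analytic at $z=1/4$.

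Using $\int_{-1}^1(1-2uz)\sqrt{1-u^2}\,du=\pi/2$, rewrite $Q(0,0,z)=\frac{1}{2z^2}-\frac{H(z)}{\pi z^2}$; evaluating the prefactor $-1/(\pi z^2)$ at $z=1/4$ yields
\[
Q(0,0,z)=Q_{\mathrm{reg}}(z)+\frac{8}{\pi}(1-4z)^2\log\frac{1}{1-4z}+O\bigl((1-4z)^3\log(1-4z)\bigr),
\]
and by the symmetry $Q(0,0,z)=Q(0,0,-z)$ the analogous expansion holds at $z=-1/4$. The integral formula yields the analytic continuation of $Q(0,0,z)$ to a slit disc of radius strictly greater than $1/4$ around each of these two singularities, so Flajolet--Odlyzko transfer~\cite{FLAJ} applies. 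From the identity $[z^n](1-z)^2\log\frac{1}{1-z}=\frac{2}{n(n-1)(n-2)}\sim 2/n^3$, each singularity contributes $\frac{8}{\pi}\cdot\frac{2(\pm 4)^n}{n^3}$, and for $n=2m$ the two contributions add to give $q(0,0,2m)\sim\frac{4}{\pi}\cdot\frac{16^m}{m^3}$, as claimed.

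The main obstacle is Step~2: carefully controlling the $\varepsilon$-cancellation between the rescaled local contribution and the analytic bulk, and verifying that the higher-order corrections in the expansions of $\cos^2(\theta/2)$, $\sin^2(\theta/2)$, $\sin^2\theta$ and of the smooth factor $(1+4z\sin^2(\theta/2))^{1/2}$ feed only into $H_{\mathrm{reg}}$ or into the subdominant $O((1-4z)^3\log(1-4z))$ term, so that the coefficient $1/2$ in the singular part of $H$---and hence the $8/\pi$ in $Q(0,0,z)$---emerges with the right constant.
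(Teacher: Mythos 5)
Your proposal is correct, and all the constants check out against the paper: your singular part $\frac{8}{\pi}(1-4z)^2\log\frac{1}{1-4z}$ is exactly what \eqref{Q(0,0,z)_after_expansion_3} gives with $\mu_{0,0}=\sqrt{2}$ and $f_0(1/4)=4\sqrt{2}$, and your bookkeeping of the two dominant singularities $\pm 1/4$ is equivalent to the paper's parity argument (the factor $32$ rather than $16$). Where you genuinely diverge is in how the logarithmic singularity of $H(z)$ is extracted. The paper factorizes $\sqrt{(1-u^2)[(1-2uz)^2-4z^2]}$ into $\sqrt{(1-u)(1-2(u+1)z)}\,\sqrt{(1+u)(1-2(u-1)z)}$, expands the regular radical as a double series $\sum\mu_{i,j}(u-1)^i(z-1/4)^j$, and evaluates each endpoint integral $\Lambda_i(z)$ in closed form (substitutions $u=1/(4z)-[(1-4z)/(4z)]v$, then $v=\cosh t$, plus linearization of powers of $\cosh$), which proves the exact structure \eqref{Q(0,0,z)_after_expansion_2}: analytic plus analytic times $\ln(1-4z)$, with explicit control of every term. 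You instead run a matched local expansion at the endpoint $\theta=0$ with the scaling window $\theta=\sqrt{\alpha/z}\,t$; this is shorter and makes the origin of the exponent $2$ and the constant $1/2$ more transparent, but it buys that at the cost of precisely the estimates you flag as the main obstacle: you must show the bulk/local matching errors are $O\bigl((1-4z)^3\log(1-4z)\bigr)$, and---crucially for the Flajolet--Odlyzko transfer you invoke---this expansion must hold uniformly for complex $z$ in a Delta-domain around $1/4$ (and $-1/4$), not merely for real $z\to 1/4^-$; your rescaling argument as written is a real-variable one, and mentioning analytic continuation to a slit disc gives the domain but not the uniform error bound on it. The paper's heavier exact computation delivers that complex-neighborhood validity automatically, which is the main thing it buys; with the uniform error control supplied, your route is a perfectly sound and somewhat more conceptual alternative proof.
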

\begin{prop}
\label{prop:Q(1,0,z)_simple_walk}
For the simple walk,
\begin{equation*}
     Q(1,0,z)=\frac{1}{2\pi}\int_{-1}^{1}\frac{1 - 2uz -\sqrt {(1-2uz)^2 - 4z^2}}{z^2}\sqrt{\frac{1+u}{1-u}}\, \textnormal{d}u.
\end{equation*}
\end{prop}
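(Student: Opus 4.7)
\textbf{Proof plan for Proposition \ref{prop:Q(1,0,z)_simple_walk}.}

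The plan is to derive a Schwarz-type integral representation for $F(x):=xQ(x,0,z)$, holomorphic in the unit disk in $x$, and then evaluate it at the boundary point $x=1$. The same representation yields Proposition~\ref{prop:exact_simple_walk} by extracting $F'(0)=Q(0,0,z)$, so Propositions~\ref{prop:exact_simple_walk} and~\ref{prop:Q(1,0,z)_simple_walk} share the bulk of the analytic work.

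Exploiting the $x\leftrightarrow y$ symmetry of the simple walk and the fact that $\delta_{-1,-1}=0$, the functional equation \eqref{counting_func_eq} becomes $K(x,y,z)Q(x,y,z)=xQ(x,0,z)+yQ(y,0,z)-xy/z$. Substituting $y=Y_0(x,z)$ annihilates the left-hand side and produces
\[
xQ(x,0,z)+Y_0(x,z)\,Q(Y_0(x,z),0,z)=xY_0(x,z)/z.
\]
The crucial observation is that for $x=e^{i\theta}$ and $z\in(0,1/4)$ the quadratic $y^2+(2\cos\theta-1/z)y+1=0$ has real coefficients and strictly positive discriminant $(1/z-2\cos\theta)^2-4$, so $Y_0(e^{i\theta},z)\in(0,1)$ is real. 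Combined with the reality of the Taylor coefficients of $Q(\cdot,0,z)$ (which gives $Q(1/x,0,z)=\overline{Q(x,0,z)}$ on $|x|=1$), subtracting the displayed identity at $x$ and at $1/x$ yields the boundary condition
\[
\operatorname{Im}\bigl[xQ(x,0,z)\bigr]=\operatorname{Im}(x)\cdot Y_0(x,z)/z \qquad\text{on } |x|=1.
\]

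Since $F(x)=xQ(x,0,z)$ is holomorphic in $|x|<1$, continuous up to the boundary (for $z$ small) and satisfies $F(0)=0$, Schwarz's integral formula --- with the additive real constant vanishing because the datum $\sin\theta\cdot Y_0(e^{i\theta},z)/z$ is odd in $\theta$ --- gives
\[
F(x)=\frac{i}{2\pi}\int_0^{2\pi}\sin\theta\,\frac{Y_0(e^{i\theta},z)}{z}\,\frac{e^{i\theta}+x}{e^{i\theta}-x}\,d\theta.
\]
Now I take the radial limit $x\to 1$: the factor $\sin\theta$ cancels the simple pole of $(e^{i\theta}+1)/(e^{i\theta}-1)=-i\cot(\theta/2)$ at $\theta=0$ via the identity $\sin\theta\cot(\theta/2)=1+\cos\theta$, and therefore
\[
Q(1,0,z)=\frac{1}{2\pi}\int_0^{2\pi}(1+\cos\theta)\,\frac{Y_0(e^{i\theta},z)}{z}\,d\theta.
\]
Using the evenness $Y_0(e^{i\theta},z)=Y_0(e^{-i\theta},z)$ to halve the range of integration to $[0,\pi]$, changing variables to $u=\cos\theta$, substituting the explicit form $Y_0(e^{i\theta},z)=(1-2uz-\sqrt{(1-2uz)^2-4z^2})/(2z)$, and applying $(1+u)/\sqrt{1-u^2}=\sqrt{(1+u)/(1-u)}$ reproduces the stated formula.

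The main obstacle is the identification of the boundary condition, which hinges on the reality of $Y_0$ on the unit circle (equivalently, on the fact that the branch points $y_1<y_2$ and $y_3<y_4$ do not collide for $z<1/|\mathcal S|=1/4$); this is what turns the problem into a genuine Schwarz problem with real data. The remaining points are routine: continuity of $F$ up to $|x|=1$ and the regularity of the limiting integrand at $\theta=0$ are immediate, and if necessary the identity is first established for $z$ in a small neighborhood of $0$ (where $Q(x,0,z)$ is manifestly holomorphic in $|x|\leq 1$) and then extended to all $z\in(0,1/4)$ by analytic continuation.
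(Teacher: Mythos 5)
Your proof is correct and follows essentially the paper's route: the boundary relation you derive on $|x|=1$ is exactly \eqref{eq:BVP-circle} specialized to the simple walk (the paper eliminates $Q(0,y,z)$ by crossing the cut $[y_1(z),y_2(z)]$, you use the diagonal symmetry and the reality of $Y_0$ on the circle, which amounts to the same identity), and solving it with the Schwarz kernel instead of the Cauchy kernel of \eqref{eq:int-circle} gives the same integral representation. From there, your evaluation at $x=1$ (with the cancellation $\sin\theta\cot(\theta/2)=1+\cos\theta$) and the substitution $u=\cos\theta$ reproduce the paper's computation verbatim, your treatment of the radial limit being in fact slightly more careful than the paper's direct instantiation of $x=1$.
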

The generating function $Q(1,0,z)$ counts the number of walks starting from $(0,0)$ and ending at the horizontal axis. In addition, by an evident symmetry, $Q(0,1,z) = Q(1,0,z)$.
\begin{prop}
\label{prop:asymp-2_simple_walk}
For the simple walk, as $n\to\infty$, 
\begin{equation*}
     \sum_{i\geq 0}q(i,0,n)\sim \frac{8}{\pi}\frac{4^{n}}{n^2}.
\end{equation*}
\end{prop}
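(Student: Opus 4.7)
The plan is to perform a Flajolet--Odlyzko singularity analysis on the integral representation of $Q(1,0,z)$ given by Proposition~\ref{prop:Q(1,0,z)_simple_walk}, and then to extract the coefficient asymptotics via a transfer theorem.

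First I would locate the dominant singularity. The discriminant inside the integral factors as $(1-2uz)^2-4z^2=(1-2z(u+1))(1-2z(u-1))$; on the real interval $u\in[-1,1]$, the smallest positive $z$ making it vanish is $z=1/(2(u+1))$, whose minimum $1/4$ is attained at $u=1$. Combined with Pringsheim's theorem and the crude bound $\sum_{i}q(i,0,n)\leq 4^n$, this identifies $z=1/4$ as the unique dominant singularity of $Q(1,0,z)$, arising from the coalescence of a branch point of the integrand with the endpoint $u=1$ of the contour.

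Next I would extract the local behaviour of $Q(1,0,z)$ at $z=1/4$. Rationalizing the numerator and substituting $u=\cos\phi$ rewrites Proposition~\ref{prop:Q(1,0,z)_simple_walk} as
\begin{equation*}
Q(1,0,z)=\frac{2}{\pi}\int_0^{\pi}\frac{1+\cos\phi}{1-2z\cos\phi+\sqrt{(1-2z\cos\phi)^2-4z^2}}\,d\phi.
\end{equation*}
Setting $t=1-4z$, a short Taylor expansion gives $(1-2z\cos\phi)^2-4z^2=t+\phi^2/4+O(\phi^4+t\phi^2)$ and $1-2z\cos\phi=1/2+O(t+\phi^2)$. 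I would split the integral as $\int_0^\varepsilon+\int_\varepsilon^\pi$ for some small fixed $\varepsilon>0$: on $[\varepsilon,\pi]$ the integrand is holomorphic in $z$ near $1/4$ and contributes only a function analytic in $t$. On $[0,\varepsilon]$, the expansion $(1+\sqrt{4t+\phi^2})^{-1}=\sum_{k\geq 0}(-1)^k(4t+\phi^2)^{k/2}$ shows that the only non-analytic dependence on $t$ at leading order comes from $-\frac{8}{\pi}\int_0^\varepsilon\sqrt{4t+\phi^2}\,d\phi$. The substitution $\phi=2\sqrt t\,v$ turns this into $-\frac{32t}{\pi}\int_0^{\varepsilon/(2\sqrt t)}\sqrt{1+v^2}\,dv$, whose singular part in $t$ is readily identified as $\frac{8}{\pi}\,t\log t$; the higher-order terms $(4t+\phi^2)^{k/2}$, $k\geq 3$, only produce singularities of the form $t^{(k+1)/2}\log t$, all $o(t\log t)$.

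Assembling these contributions yields
\begin{equation*}
Q(1,0,z)=A(z)+\tfrac{8}{\pi}(1-4z)\log(1-4z)+o\bigl((1-4z)\log(1-4z)\bigr),\qquad z\to 1/4,
\end{equation*}
with $A$ holomorphic at $z=1/4$. Since $[z^n]\bigl((1-4z)\log(1-4z)\bigr)=4^n/(n(n-1))\sim 4^n/n^2$, the Flajolet--Odlyzko transfer theorem gives $\sum_{i\geq 0}q(i,0,n)=[z^n]Q(1,0,z)\sim\frac{8}{\pi}\frac{4^n}{n^2}$. The main technical hurdle is to verify that $Q(1,0,z)$ continues analytically to a $\Delta$-domain at $1/4$ and that the expansion above holds uniformly in such a domain; this reduces to a careful choice of the branch of the inner square root and to checking that, as $z$ leaves the real axis into a punctured complex neighbourhood of $1/4$, the branch points $1/(2(u\pm 1))$ of the integrand in the $u$-plane do not cross the integration contour $[-1,1]$. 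Once this geometric check is carried out, the remaining steps are routine.
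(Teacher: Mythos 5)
Your strategy is essentially the one the paper prescribes (its own proof is omitted, but it says precisely: start from the integral of Proposition \ref{prop:Q(1,0,z)_simple_walk} and expand the integrand near $u=1$, $z=1/4$), and your local computation is sound: rationalizing the numerator to $4/\bigl(1-2uz+\sqrt{(1-2uz)^2-4z^2}\bigr)$, setting $u=\cos\phi$, $t=1-4z$, and scaling $\phi=2\sqrt{t}\,v$ does yield the singular part $\tfrac{8}{\pi}(1-4z)\log(1-4z)$, whose coefficients $\tfrac{8}{\pi}\,4^n/(n(n-1))$ give the claimed asymptotics; the constant also checks out against $Q(1,0,1/4)=2$ and Proposition \ref{prop:asymp-3_simple_walk} through the relation $(4-1/z)Q(1,1,z)=2Q(1,0,z)-1/z$. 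The only real difference from the paper's template is cosmetic: the paper would instead evaluate the analogues of the integrals $\Lambda_i(z)$ in closed form (the same change of variable and $v=\cosh t$ as in the proof of \eqref{Q(0,0,z)_after_expansion_2}), obtaining a global decomposition of the form analytic plus $f_i(z)(z-1/4)^{i+1}\log(1-4z)$, which makes the transfer to coefficients immediate, whereas you obtain the same local form by a scaling argument with error terms.

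One assertion does need repair: $z=1/4$ is \emph{not} the unique singularity on the circle $|z|=1/4$. Writing $(1-2uz)^2-4z^2=(1-2z(u+1))(1-2z(u-1))$, the second factor vanishes at $(u,z)=(-1,-1/4)$, so the branch point $1+1/(2z)$ collides with the endpoint $u=-1$ and $Q(1,0,z)$ -- which, unlike $Q(0,0,z)$, has no parity -- is also singular at $z=-1/4$; your Pringsheim-plus-crude-bound argument only identifies the radius of convergence, not the set of singularities on that circle. The oversight is harmless: at $u=-1$ the weight $\sqrt{(1+u)/(1-u)}$ vanishes like $\sqrt{1+u}$, so the local behavior at $-1/4$ is of type $(1+4z)^2\log(1+4z)$, contributing only $O(4^n/n^3)$; but the transfer theorem must then be applied with both dominant points $\pm 1/4$ on the boundary of the $\Delta$-domain (just as the proof of Proposition \ref{prop:asymp-1_simple_walk} has to account for $\pm 1/4$, there via parity). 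With that fix, together with the $\Delta$-domain continuation you yourself flag (unproblematic, since for $z$ off the real axis near $\pm 1/4$ the $u$-plane branch points $1/(2z)\mp 1$ leave the segment $[-1,1]$), your argument is complete.
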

Having the expressions of $Q(1,0,z)$ and $Q(0,1,z)$, the series $Q(1,1,z)$ is directly obtained from \eqref{counting_func_eq}. The asymptotics of its coefficients, which represent the total number of walks of a given length, is the subject of the next result.
\begin{prop}
\label{prop:asymp-3_simple_walk}
For the simple walk, as $n\to\infty$, 
\begin{equation*}
     \sum_{i,j\geq 0}q(i,j,n)\sim \frac{4}{\pi}\frac{4^{n}}{n}.
\end{equation*}
\end{prop}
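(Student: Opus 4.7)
The plan is to combine the functional equation \eqref{counting_func_eq} with Proposition \ref{prop:Q(1,0,z)_simple_walk} to obtain a closed form for $Q(1,1,z)$, and then extract its dominant singularity at $z=1/4$. First, I would set $x=y=1$ in \eqref{counting_func_eq}. For the simple walk $\delta_{-1,-1}=0$, $c(1)=\widetilde{c}(1)=1$, $K(1,1,z)=4-1/z$, and by the obvious symmetry $Q(0,1,z)=Q(1,0,z)$, so
\begin{equation*}
     Q(1,1,z)=\frac{1-2z\,Q(1,0,z)}{1-4z}.
\end{equation*}
Since $\sum_{i,j\geq 0} q(i,j,n)\leq 4^{n}$, the radius of convergence of $Q(1,1,z)$ is at least $1/4$, hence the apparent pole at $z=1/4$ must be cancelled by a zero of the numerator.

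The second step is to verify this cancellation, i.e.\ $Q(1,0,1/4)=2$. Plugging $z=1/4$ into the integral of Proposition \ref{prop:Q(1,0,z)_simple_walk} and using the factorisation $(1-u/2)^{2}-1/4=\tfrac{1}{4}(1-u)(3-u)$, the expression reduces to
\begin{equation*}
     Q(1,0,1/4)=\frac{8}{\pi}\int_{-1}^{1}\!\Bigl[(1-u/2)\sqrt{\tfrac{1+u}{1-u}}-\tfrac{1}{2}\sqrt{(1+u)(3-u)}\Bigr]\,\textnormal{d}u.
\end{equation*}
The substitution $u=1-2t^{2}$ evaluates the first piece to $3\pi/4$, and $u-1=2\sin\phi$ (using $(1+u)(3-u)=4-(u-1)^{2}$) evaluates the second piece to $\pi/2$. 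Hence $Q(1,0,1/4)=(8/\pi)(3\pi/4-\pi/2)=2$, as required.

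Third, I would perform a local singularity analysis of $Q(1,0,z)$ near $z=1/4$. The singularity of the integrand originates from $u$ close to $1$, where $(1-2uz)^{2}-4z^{2}$ vanishes at $(z,u)=(1/4,1)$. Localising this contribution produces
\begin{equation*}
     Q(1,0,z)=2+A(1-4z)+\frac{8}{\pi}(1-4z)\log(1-4z)+o\bigl((1-4z)\log(1-4z)\bigr)
\end{equation*}
for some constant $A$; the coefficient $8/\pi$ is consistent with Proposition \ref{prop:asymp-2_simple_walk}, since $[z^{n}](1-4z)\log(1-4z)\sim 4^{n}/n^{2}$. Writing $1-2zQ(1,0,z)=(1-4z)-2z(Q(1,0,z)-2)$ and dividing by $1-4z$ then gives
\begin{equation*}
     Q(1,1,z)=(1-2zA)-\frac{16z}{\pi}\log(1-4z)+o\bigl(\log(1-4z)\bigr),
\end{equation*}
so the leading singular term at $z=1/4$ is $-(4/\pi)\log(1-4z)$. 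Applying the Flajolet--Sedgewick transfer theorem with $[z^{n}]\log(1-4z)=-4^{n}/n$ yields $\sum_{i,j\geq 0}q(i,j,n)=[z^{n}]Q(1,1,z)\sim (4/\pi)\cdot 4^{n}/n$, as claimed.

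The main obstacle is identifying the coefficient $8/\pi$ of $(1-4z)\log(1-4z)$ in the expansion of $Q(1,0,z)$: while the pointwise cancellation $Q(1,0,1/4)=2$ is elementary, the \emph{next} singular term demands either a careful local expansion of the integrand around $(z,u)=(1/4,1)$, or an a priori justification that the coefficient asymptotics of Proposition \ref{prop:asymp-2_simple_walk} lifts to a genuine Flajolet--Sedgewick-type singular expansion. Once this expansion is in hand, the remainder is routine singularity analysis.
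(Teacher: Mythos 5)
Your proposal is essentially the paper's own argument: the proof given there consists precisely of the identity $(4-1/z)Q(1,1,z)=2Q(1,0,z)-1/z$ obtained from \eqref{counting_func_eq}, combined with the information on $Q(1,0,z)$ from Propositions \ref{prop:Q(1,0,z)_simple_walk} and \ref{prop:asymp-2_simple_walk}, and your explicit verification of the cancellation $Q(1,0,1/4)=2$ followed by division by $1-4z$ is just the detailed version of that step (the arithmetic, including the values $3\pi/4$ and $\pi/2$ and the final transfer constant $4/\pi$, checks out). The one point you flag, namely justifying the term $\frac{8}{\pi}(1-4z)\log(1-4z)$, is exactly what the omitted, Proposition~\ref{prop:asymp-1_simple_walk}-style proof of Proposition \ref{prop:asymp-2_simple_walk} supplies, i.e.\ a representation $Q(1,0,z)=g(z)+f(z)(1-4z)\log(1-4z)$ with $f,g$ analytic at $z=1/4$ (coefficient asymptotics alone would not give the singular expansion, but once this form is established Proposition \ref{prop:asymp-2_simple_walk} forces $f(1/4)=8/\pi$), so your plan closes along the same route as the paper.
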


\begin{proof}[of Proposition \ref{prop:exact_simple_walk}]
The key point is to reduce  the computation of $Q(x,0,z)$  to a BVP set on $\Gamma =\{t\in\mathbb C : \vert t\vert =1\}$, according to the procedure proposed in \cite{FI,FIM}, and briefly described now.

 Letting $\D$ be a simply connected domain bounded by a smooth curve $\mathcal{L}$, the BVP we shall encounter in this study belongs to a wide class (studied among others by Riemann, Dirichlet, Hilbert, Carleman) and can be stated as follows. 
 
 \emph{Find a function $F$ of a single complex variable $x$, holomorphic in $\D$ and satisfying a boundary condition of the form}
\[
F(t) = F(\bar{t}) + g(t), \quad \forall t \in \mathcal{L},
\]
\emph{where $g(t)$ is a known function satisfying at least a  H\"older condition on $\mathcal{L}$}.

Indeed, for  any $t \in \Gamma$, we have
\begin{equation} \label{eq:BVP-circle}
     c(t)Q(t,0,z) - c(\bar{t})Q(\bar{t},0,z) = \frac{tY_0(t,z) - \bar{t}Y_0(\bar{t},z)}{z},
\end{equation}
where $\bar{t}$ stands for the complex conjugate of $t$. 
The proof of \eqref{eq:BVP-circle} relies on simple manipulations on the functional equation 
\eqref{counting_func_eq}. The main argument is the following. On $K(x,y,z)=0$, letting $y$ tend successively from above ($y^+$) and from below ($y^-$) to an arbitrary  point $y$ on  the cut 
$[y_1(z),y_2(z)]$ (which is merely a segment) defined in \eqref{eq:bp1}, $Q(0,y,z)$ remains continuous, and thus can be eliminated by computing the difference $Q(0,y^+,z)- Q(0,y^-,z) =0$ in the functional equation \eqref{counting_func_eq}, see \cite{FI,FIM} for full details.

In the present case, a pleasant point (from a computational point of view) is that, on the unit circle, we have $\bar{t}=1/t$. Hence, after mutiplying both sides of \eqref{eq:BVP-circle} by $1/(t-x)$ with $|x|<1$, integrating over $\Gamma$, and making use of Cauchy's formula, we get the  following integral form (noting that here $c(x)=x$)
\begin{equation} 
\label{eq:int-circle}
     c(x)Q(x,0,z) = \frac{1}{2i\pi z}\int_\Gamma \frac{tY_0(t,z) - \bar{t}Y_0(\bar{t},z)}{t-x}\,\text{d}t , \quad \forall |x|<1.
\end{equation}
For $t\in\Gamma$, we know from  general results \cite{FIM, Ra} that for $z \in [0,1/4]$, $Y_0(t,z)$ is real and belongs to the segment $[y_1(z),y_2(z)]$, the extremities of which are the two branch points located inside the unit circle in the $y$-plane, see Section \ref{sec:functional-equation}. Then it is not difficult to check that the integral on the right-hand side of \eqref{eq:int-circle} does vanish at $x=0$. Hence we are entitled to write
\begin{equation}
\label{eq:Q00}
     Q(0,0,z) = \lim_{x\to 0} \frac{1}{2i\pi x z}\int_\Gamma \frac{tY_0(t,z) - \bar{t}Y_0(\bar{t},z)}{t-x}\,\text{d}t = \frac{1}{2i\pi z}\int_\Gamma \frac{tY_0(t,z) - \bar{t}Y_0(\bar{t},z)}{t^2}\,\text{d}t,
\end{equation}
where the last equality follows from l'Hospital's rule.

To exploit formula \eqref{eq:Q00}, it is convenient  to make the straightforward change of variable $t=e^{i\theta}$, which yields 
\begin{equation} 
\label{eq:Q100}
     Q(0,0,z) = \frac{i}{2\pi z}\int_0^{2\pi} \sin\theta\, e^{-i\theta}Y_0(e^{i\theta},z) \,\text{d}\theta =  \frac{1}{2\pi z}\int_0^{2\pi} \sin^2\theta\, Y_0(e^{i\theta},z) \,\text{d}\theta,
\end{equation}
where we have used the fact that $Y_0(e^{i\theta},z)$ is real and $Y_0(e^{i\theta},z)=Y_0(e^{-i\theta},z)$. Furthermore, we have by \eqref{expression_X_Y} 
\begin{equation} \label{eq:Y0}
     Y_0(e^{i\theta},z)= \frac{1 -2z\cos \theta-\sqrt {(1-2z\cos\theta)^2 - 4z^2}}{2z}.
\end{equation}
Then, instantiating \eqref{eq:Y0} in \eqref{eq:Q100}, we get after some light algebra
 \begin{equation}\label{eq:Q200}
     Q(0,0,z) = \frac{1}{2z^2} - \frac{1}{\pi z^2}\int_0^{\pi} \sin^2\theta\sqrt {(1-2z\cos\theta)^2 - 4z^2} \,\text{d}\theta.
\end{equation}
Putting now $u=\cos\theta$ in the integrand, equation \eqref{eq:Q200} becomes exactly  
\eqref{eq:Q001}.
\end{proof}

\begin{proof}[of Proposition \ref{prop:asymp-1_simple_walk}] The function $Q(0,0,z)$ is holomorphic in $\mathbb C\setminus ((-\infty,-1/4]\cup[1/4,\infty))$, as it  easily emerges from Proposition \ref{prop:exact_simple_walk}. Accordingly, the asymptotics of its coefficients will be derived from the behavior of the function in the neighborhood of $\pm 1/4$ 
(see, e.g., \cite{FLAJ}). On the other hand $Q(0,0,z)$ is even, as can be seen in \eqref{eq:Q001}, or directly since $q(0,0,2n+1)=0$, for any $n\geq 0$,  in the case of the simple walk (see Figure \ref{ExExEx}).  Consequently, it suffices to focus on the point $1/4$.

\smallskip
First, we rewrite $\sqrt{(1-u^2)[(1-2uz)^2-4z^2]}$ as the product
\[
\sqrt{(1-u)(1-2(u+1)z)}\,\sqrt{(1+u)(1-2(u-1)z)},
\]
where the second radical admits the expansion 
$\sum_{i,j\geq 0}\mu_{i,j} (u-1)^i(z-1/4)^j$. Therefore, for $z$ in a neighborhood of $1/4$, we have
\begin{equation}
\label{Q(0,0,z)_after_expansion_1}
     Q(0,0,z) = -\frac{1}{\pi z^2}\sum_{i,j\geq 0}\mu_{i,j}(z-1/4)^j\int_{-1}^1(u-1)^i\sqrt{(1-u)(1-2(u+1)z)}\,\text du.
\end{equation}

We shall show below that, for any $i\geq 0$, there exist two functions, say $f_i$ and $g_i$, which are analytic at $z=1/4$ and such that, for $z$ in a neighborhood of $1/4$,
\begin{equation}
\label{Q(0,0,z)_after_expansion_2}
     \int_{-1}^1(u-1)^i\sqrt{(1-u)(1-2(u+1)z)}\,\text du = f_i(z)(z-1/4)^{i+2}\ln(1-4z)+g_i(z).
\end{equation}
Now, by inserting identity \eqref{Q(0,0,z)_after_expansion_2} into \eqref{Q(0,0,z)_after_expansion_1}, we obtain the existence of a function, say $g$, analytic at $z=1/4$ and such that
\begin{equation}
\label{Q(0,0,z)_after_expansion_3}
     Q(0,0,z) = -\frac{16}{\pi}\mu_{0,0}(z-1/4)^2\ln(1-4z)[f_0(1/4)+O(z-1/4)]+g(z).
\end{equation}
Furthermore, it is easy to prove $f_0(1/4)=4\sqrt{2}$ and $\mu_{0,0}=\sqrt{2}$. With these values, a classical singularity analysis (see, e.g.,\ \cite{FLAJ,TIT}) shows that, as $n\to\infty$, the $n{\text{th}}$ coefficient of the function in the right-hand side of \eqref{Q(0,0,z)_after_expansion_3} is equivalent to 
$(16/\pi) 4^n/n^3$. It is then immediate to infer that  
\begin{equation*}
     q(0,0,2n)\sim \frac{32}{\pi} \frac{4^{2n}}{(2n)^3}, 
\end{equation*}
remarking in the latter quantity the factor $32$ (and not $16\,!$), due to the parity of $Q(0,0,z)$ mentioned earlier.
\end{proof}

\begin{proof}[  of \eqref{Q(0,0,z)_after_expansion_2}]
Let $\Lambda_i(z)$ denote the integral in the left-hand side of \eqref{Q(0,0,z)_after_expansion_2}. The change of variable $u=1/(4z)-[(1-4z)/(4z)] v$ gives
\begin{equation*}
     \Lambda_i(z) = \left(\frac{1-4z}{4z}\right)^i\sqrt{2z}\int_{1}^{1+8z/(1-4z)}(1-v)^i\sqrt{v^2-1}\,
     \text{d}v.
\end{equation*}
Then, by letting $v = \cosh t$, we conclude that
\begin{equation}
\label{eq:after_cosh}
     \Lambda_i(z) = \left(\frac{1-4z}{4z}\right)^i\sqrt{2z}\int_{0}^{\cosh^{-1}(1+8z/(1-4z))}(1-\cosh t)^i(-1+\cosh^2 t)\,\text{d}t.
\end{equation}
Besides, by a classical linearization argument, there exist real coefficients $\alpha_0,\ldots ,\alpha_{i+2}$ such that 
\begin{equation}\label{eq:linearization}
     (1-\cosh t)^i(-1+\cosh^2 t) = \sum_{k=0}^{i+2}\alpha_k \cosh( kt).
\end{equation}
So, by means of  \eqref{eq:linearization}, it is easy to integrate $\Lambda_i(z)$. Then a delinearization argument shows the existence of two functions, $g_i$ and $h_i$, which are analytic at $z=1/4$ and satisfy
\begin{equation*}
     \Lambda_i(z) = g_i(z) +\left(\frac{1-4z}{4z}\right)^i \sqrt{2z}\,h_i(z) \cosh^{-1}(1+8z/(1-4z)).
\end{equation*}
Finally, remembering that, for all $u\ge1$,
$\cosh^{-1}u = \log (u + \sqrt{u^2-1})$, equation \eqref{Q(0,0,z)_after_expansion_2} follows, 
since $ \cosh^{-1}(1+8z/(1-4z))+\ln(1/4-z)$ is analytic at $z=1/4$.
\end{proof}

\begin{proof}[of Proposition \ref{prop:Q(1,0,z)_simple_walk}]
The argument mimics the one used in Proposition \ref{prop:asymp-1_simple_walk}. Instantiating $x=1$ in equation \eqref{eq:int-circle} and using \eqref{eq:Y0} yields directly
\begin{align*}
     Q(1,0,z)  &= \frac{1}{2i\pi z}\int_\Gamma \frac{tY_0(t,z) - \bar{t}Y_0(\bar{t},z)}{t-1}\,\text{d}t = 
     \frac{1}{2\pi z}\int_0^{2\pi} (e^{i\theta}+1)Y_0(e^{i\theta},z)\,\text{d}\theta \\[0.2cm]
   &= \frac{1}{2\pi z^2}\int_0^{\pi} (1+\cos \theta)(1 -2z\cos\theta-\sqrt {(1-2z\cos\theta)^2 - 4z^2}) \,\textnormal{d}\theta,
    \end{align*}
and the proof of Proposition \ref{prop:Q(1,0,z)_simple_walk} is complete.
\end{proof}

\begin{proof}[  of Proposition \ref{prop:asymp-2_simple_walk}]
It is quite similar to that of Proposition \ref{prop:asymp-1_simple_walk}, by starting from the integral formulation of $Q(1,0,z)$ written in Proposition \ref{prop:Q(1,0,z)_simple_walk}, and making an expansion of the integrand in the neighborhood of $u=1$ and $z=1/4$. So we omit it.
\end{proof}

\begin{proof}[ of Proposition \ref{prop:asymp-3_simple_walk}]
It is an immediate consequence of Proposition \ref{prop:asymp-2_simple_walk}, by using the equality 
\begin{equation*}
     (4-1/z)Q(1,1,z) = 2Q(1,0,z)-1/z,
\end{equation*}
which follows from \eqref{counting_func_eq}.
\end{proof}

\begin{rem} In fact $q(0,0,2n)$ is the product of the two Calalan's numbers $C_n$ and $C_{n+1}$ and similar expressions exist for the coefficients of $Q(1,0,z)$ and $Q(1,1,z)$. They can be obtained either using a shuffle of one dimensional-random walks, or the reflection principle 
(see for instance \cite{GKS}).
\end{rem}

\section[A general approch]{Asymptotics of the number of walks with small steps confined to the quarter plane: a general approch}
\label{sec:generalization}

This part aims at extending the approach of Section \ref{sec:simple-walk} to all $79$ models. For this, we show in Section \ref{sec-reduction} that $Q(x,0,z)$ and $Q(0,y,z)$ satisfy some BVP, that we solve in Section \ref{sec:CGF-and-BVP}. Next, in Section \ref{sec:computation} we compute $Q(0,0,z)$, $Q(1,0,z)$, $Q(0,1,z)$ and $Q(1,1,z)$ (question \ref{question-howmany}). In Section \ref{sec:group-4}, we see that the analysis of Section \ref{sec:simple-walk} applies verbatim to the $19$ walks having a group of order $4$. Finally, in Section \ref{sec:singularities} we analyze the singularities of the generating functions (question \ref{question-asymptotic}). In this short paper, we cannot provide the full details for the analysis of these singularities: we just explain where the singularities come from, and we postpone to the ongoing work \cite{FR3} the computation of the exact behavior of the generating functions in the neighborhood of the (dominant) singularities.
\subsection{Reduction to a boundary value problem}
\label{sec-reduction}
Most of the results in this section are borrowed from \cite{FIM,Ra}. In the general framework (i.e., for all $79$ models), equation \eqref{eq:BVP-circle} holds on the curve, drawn in the $x$-complex plane $\mathbb{C}$,
\begin{equation*}
     \mathcal{M}_z=  X_0(\underleftarrow{\overrightarrow{[y_1(z),y_2(z)]}},z) =\overline{X}_1(\underrightarrow{\overleftarrow{[y_1(z),y_2(z)]}},z),
\end{equation*}
 depicted in the next lemma, when the \emph{genus} of the Riemann surface (corresponding to the manifold $\{(x,y:\in\mathbb C^2:K(x,y,z)=0\}$) is equal to $1$. Here, remembering that $z$ is real, we let
 \begin{equation*}
      \underleftarrow{\overrightarrow{[y_1(z),y_2(z)]}}
\end{equation*}
stand for a contour, which is the slit $[y_1(z), y_2(z)]$ traversed from $y_1(z)$ to $y_2(z)$ along the upper edge, and then back to $y_1(z)$ along the lower edge. 
    \begin{lem}
     \label{Properties_curves_0}
        The curve $\mathcal M_z$ is one of the two components of a plane quartic curve  with  the following properties.
     \begin{itemize}
          \item It is symmetrical with respect to the real axis.
          \item It is  connected and closed in $\mathbb{C}\cup \{\infty\}$.
          \item It splits the plane into two connected domains, and we shall denote by 
          $\mathscr{G}(\mathcal{M}_z)$ that containing the point $x_{1}(z)$. 
          In addition, $\mathscr{G}(\mathcal{M}_z)\subset\mathbb{C}\setminus [x_{3}(z),x_{4}(z)]$.
     \end{itemize}
     \end{lem}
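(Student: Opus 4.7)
The plan is to build $\mathcal M_z$ directly from the parametrization $y\mapsto X_{0,1}(y,z)$ on the slit $[y_1(z),y_2(z)]$ and verify each listed property in turn.

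First I would note that on $[y_1,y_2]$ the discriminant $\widetilde d(y,z)$ is non-positive, so formula \eqref{expression_X_Y} gives $X_1(y,z)=\overline{X_0(y,z)}$, with equality only at the two endpoints. This yields the symmetry with respect to the real axis, and it shows that the round trip along $\underleftarrow{\overrightarrow{[y_1(z),y_2(z)]}}$ is mapped to a closed continuous curve in $\mathbb C\cup\{\infty\}$, the two arcs $X_0([y_1,y_2])$ and $X_1([y_1,y_2])$ being glued at the images of the branch points.

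Next I would extract the underlying quartic. Applying Vieta's formulas to the $x$-quadratic in the kernel gives
\begin{equation*}
\widetilde a(y)\,(x+\bar x)+\widetilde b(y)-y/z = 0,\qquad \widetilde a(y)\,x\bar x-\widetilde c(y) = 0,
\end{equation*}
each polynomial of degree at most two in $y$. Eliminating $y$ via the Sylvester resultant produces a polynomial identity in $(x,\bar x)$ of total degree at most four, whose real locus decomposes into two components: $\mathcal M_z$ itself and its analogue attached to the slit $[y_3(z),y_4(z)]$. Connectedness of $\mathcal M_z$ is then clear from the connectedness of the segment and the continuity of $X_0,X_1$; closedness in $\mathbb C\cup\{\infty\}$ follows, after one-point compactification, once the possible zeros of $\widetilde a$ on the slit are handled. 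To apply the Jordan curve theorem I would appeal to the injectivity of $X_0$ on the open slit, which is established in \cite{FIM} via a monotonicity analysis of its real and imaginary parts.

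The hard part, as I anticipate it, is the localization $\mathscr G(\mathcal M_z)\subset \mathbb C\setminus[x_3(z),x_4(z)]$. The strategy is two-fold. First, no $x\in[x_3,x_4]$ lies on $\mathcal M_z$: such an $x$ would coincide with some $X_0(y)$ with $y\in[y_1,y_2]$, so $y$ would be a real root of $K(x,\cdot,z)$, hence equal to $Y_0(x,z)$ or $Y_1(x,z)$; but on $[x_3,x_4]$ the discriminant $d(x,z)$ is non-negative and both roots sit in $[y_3(z),y_4(z)]$, disjoint from $[y_1,y_2]$. Consequently $[x_3,x_4]$ is contained in exactly one connected component of $\mathbb C\setminus\mathcal M_z$. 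To conclude, I would show that this component is \emph{not} the one containing $x_1(z)$, by exhibiting a test path (along the real axis, using \eqref{eq:bp1} and an explicit description of the finite set $\mathcal M_z\cap\mathbb R$) that crosses $\mathcal M_z$ when running from $x_1(z)$ to any point of $[x_3,x_4]$. Making this path-construction rigorous, and in particular pinning down $\mathcal M_z\cap\mathbb R$, is where the bulk of the geometric work lies, and the toolkit of \cite{FIM} supplies precisely what is needed.
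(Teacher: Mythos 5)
You should first note the benchmark you are being compared against: the paper offers no proof of this lemma at all -- it is imported wholesale from \cite{FIM,Ra} ("most of the results in this section are borrowed from..."). Your outline is essentially a reconstruction of the argument of \cite{FIM} (Vieta relations for the two $x$-roots over the cut $[y_1(z),y_2(z)]$, elimination of $y$ to obtain a real quartic in $(\Re x,\Im x)$, gluing of the two complex-conjugate branches $X_0,X_1$ at the images of the branch points, Jordan curve theorem via simplicity of the parametrization), so the overall route is the right one and matches the cited source. Two secondary imprecisions: the naive Sylvester resultant of your two relations (one of degree $1$ in $\Re x$, one of degree $2$ in $(\Re x,\Im x)$, both quadratic in $y$) only gives total degree $6$, so the reduction to an honest quartic needs the specific structure exploited in \cite{FIM}; and the cut $[x_3(z),x_4(z)]$ may pass through $\infty$ when $x_4(z)<0$ or $x_4(z)=\infty$, which your real-axis test path must accommodate.

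The genuine gap is in your key localization step. You assert that on $[x_3(z),x_4(z)]$ the discriminant $d(x,z)$ is non-negative and that both roots $Y_0(x,z),Y_1(x,z)$ are real and lie in $[y_3(z),y_4(z)]$. This is backwards: $[x_3,x_4]$, like $[x_1,x_2]$, is a branch cut of $Y(\cdot,z)$, on which $d(x,z)\leq 0$. Indeed $d(1,z)=[b(1)-1/z]^2-4a(1)c(1)>0$ for $z<1/\vert\Sc\vert$, so $d>0$ on $(x_2,x_3)$, and since $x_3$ is a simple root in the genus-$1$ regime, $d$ changes sign there and is negative on $(x_3,x_4)$. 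Hence $Y_0(x),Y_1(x)$ are complex conjugate and non-real for $x$ in the interior of the cut; they are real only at $x=x_3,x_4$, where the common value generally does not lie in $[y_3,y_4]$ (for the simple walk $Y(x_3)=1\in(y_2,y_3)$ and $Y(x_4)=-1$). The conclusion $[x_3,x_4]\cap\mathcal M_z=\emptyset$ survives, but with the corrected reasoning: a point of $\mathcal M_z$ is of the form $X_{0,1}(y,z)$ with $y\in[y_1,y_2]$ real, so interior points of the cut are excluded because there the $y$-roots of $K(x,\cdot,z)$ are non-real, and the endpoints require the separate fact, to be extracted from \cite{FIM}, that $Y(x_3,z)$ and $Y(x_4,z)$ avoid $[y_1,y_2]$. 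Finally, the identification of which complementary component contains $[x_3,x_4]$ -- which requires pinning down $\mathcal M_z\cap\mathbb R=\{X(y_1,z),X(y_2,z)\}$ and its position relative to $x_1,x_2,x_3$, not merely exhibiting one crossing path -- is precisely the heaviest part, and you defer it to \cite{FIM}; that is defensible here only because the paper itself does exactly the same.
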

Similarly, we can define, in the $y$-complex plane $\mathbb{C}$, the curve $\mathcal{L}_z$ and the domain $\mathscr{G}(\mathcal{L}_z)$.

However, in the case of a group \eqref{def_generators_group} of order $4$, the two components introduced above coincide to form a double circle, as in Section \ref{sec:simple-walk}. Also, in the case of \emph{genus $0$}, there is only one component, which for instance can be an ellipse, see \cite[Chapter 6]{FIM}. 

Then,  starting from the formal boundary condition \eqref{eq:BVP-circle} on the curve $\mathcal{M}_z$, the function $Q(x,0,z)$ can be analytically continued with respect to $x$ from the unit disc to the domain $\mathscr{G}(\mathcal{M}_z)$, see \cite{FIM,Ra}. The properties quoted in this section lead to the formulation of the fundamental BVP. 

\emph{Find a function $x\mapsto Q(x,0,z)$ analytic in the domain $\mathscr{G}(\mathcal{M}_z)$, and satisfying  condition \eqref{eq:BVP-circle} on the boundary $\mathcal{M}_z$}.

\subsection{Solution of the boundary value problem by means of conformal gluing}
\label{sec:CGF-and-BVP}

To solve the BVP stated in the previous section, we make use of \cite{FI,FIM,Ra}. In particular, we need the notion of conformal gluing function (CGF). Indeed, as it is, the BVP holds on a curve that splits the plane into two connected components (see Lemma \ref{Properties_curves_0}). However, the BPV on the curve $\mathcal{M}_z$ can be reduced to a BVP set on a segment, which is somehow computationally more convenient (see \cite{LIT} and references therein). 
\begin{defn} \label{def_CGF}
     Let $\mathscr{C}\subset\mathbb{C}\cup \{\infty\}$ be an open and simply connected domain, symmetrical with respect to the real axis, and not equal to $\emptyset$, $\mathbb{C}$  or $\mathbb{C}\cup \{\infty\}$. A function $w$ is said to be a conformal gluing  function (CGF) for the domain $\mathscr{C}$ if it satisfies the following conditions.
     \begin{itemize}
     \item $w$ is meromorphic in $\mathscr{C}$.
     \item $w$ establishes a conformal mapping of $\mathscr{C}$ onto the complex plane cut along a segment.
     \item For all $t$ in the boundary of $\mathscr{C}$, $w(t)=w(\overline{t})$.
     \end{itemize}
\end{defn}

For instance, the mapping $t\mapsto t+1/t$ is a CGF for the unit disc centered at $0$. It is worth noting that the existence (however without any explicit expression) of a CGF for a generic domain $\mathscr{C}$ is ensured by general results on conformal gluing \cite[Chapter 2]{LIT}. In the sequel, we shall assume  that the unique pole  of $w$ is at $t=0$. Then the main result of this section is the following. \begin{prop}
\label{prop:expression_Q(x,0,z)}
For $x\in\mathscr{G}(\mathcal{M}_z)$,
\begin{equation} \label{expression_Q(x,0,z)}
c(x)Q(x,0,z)-c(0)Q(0,0,z)=\frac{1}{2\pi i z}\int_{\mathcal{M}_z} tY_0(t,z)\frac{w'(t,z)}{w(t,z)-w(x,z)}\,\textnormal{d}t,
\end{equation}
where $w(x,z)$ is the gluing function for the domain $\mathscr{G}(\mathcal{M}_z)$ in the 
$\C_x$-plane.

\end{prop}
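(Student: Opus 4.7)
The plan is to realize \eqref{expression_Q(x,0,z)} as the Plemelj--Sokhotski representation, in the domain $\mathscr{G}(\mathcal{M}_z)$, of $F(x) := c(x)Q(x,0,z)$, by using the CGF $w$ to transport the Carleman BVP stated at the end of Section~\ref{sec-reduction} into a classical linear Riemann problem on a cut plane. By the analysis of Section~\ref{sec-reduction}, $F$ is holomorphic in $\mathscr{G}(\mathcal{M}_z)$, continuous up to $\mathcal{M}_z$, and on $\mathcal{M}_z$ satisfies
\[
F(t) - F(\bar t) \;=\; G(t,z) \;:=\; \frac{tY_0(t,z) - \bar t Y_0(\bar t,z)}{z},
\]
a jump condition in which, crucially, $G(\bar t,z) = -G(t,z)$.

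The first substantive step is to push the problem forward via $w$. Since $w(\cdot,z)$ maps $\mathscr{G}(\mathcal{M}_z)$ conformally onto $\C \setminus L_z$ for some segment $L_z$, with a single simple pole at $t=0$, the function $\widetilde F := F\circ w^{-1}$ is holomorphic on $\C \setminus L_z$ and satisfies $\widetilde F(\infty)=F(0)=c(0)Q(0,0,z)$. The boundary identity above translates into the classical jump relation $\widetilde F^+(\sigma) - \widetilde F^-(\sigma) = \widetilde G(\sigma,z)$ on $L_z$, since the two preimages of $\sigma \in L_z$ under $w$ are exactly a pair $t,\bar t \in \mathcal{M}_z$. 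Applying the Plemelj--Sokhotski formula to $\widetilde F - F(0)$, which is holomorphic on $\C \setminus L_z$ and vanishes at infinity, and then pulling back via $\sigma = w(x,z)$, $\tau = w(t,z)$, $\textnormal{d}\tau = w'(t,z)\,\textnormal{d}t$, I obtain
\[
F(x) - F(0) \;=\; \frac{1}{2i\pi}\int_{\mathcal{M}_z^{+}} G(t,z)\,\frac{w'(t,z)}{w(t,z)-w(x,z)}\,\textnormal{d}t,
\]
where $\mathcal{M}_z^{+}$ denotes the upper half of $\mathcal{M}_z$, i.e.\ one of the two sheets of $w^{-1}(L_z)$.

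To recover the integral over the whole curve $\mathcal{M}_z$ appearing in \eqref{expression_Q(x,0,z)}, I would next exploit the antisymmetry $G(\bar t,z) = -G(t,z)$. Parametrizing $\mathcal{M}_z$ symmetrically and differentiating $w(t) = w(\bar t)$ along such a parametrization yields the identities $w(\bar t,z)=w(t,z)$ and $w'(\bar t,z)\,\textnormal{d}\bar t = w'(t,z)\,\textnormal{d}t$ on $\mathcal{M}_z$. Splitting $G(t,z)$ as $[tY_0(t,z)-\bar t Y_0(\bar t,z)]/z$ and performing the change of variables $s = \bar t$ in the piece involving $\bar t Y_0(\bar t,z)$ converts the integral over $\mathcal{M}_z^{+}$ into an integral of $sY_0(s,z)$ over $\mathcal{M}_z^{-}$ (the lower half, with the orientation induced by $\mathcal{M}_z$). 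Added to the initial integral over $\mathcal{M}_z^{+}$, this amounts to a single integration of $tY_0(t,z)/z$ against $w'(t,z)/[w(t,z)-w(x,z)]$ over the whole contour $\mathcal{M}_z$, which is exactly \eqref{expression_Q(x,0,z)}.

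The main obstacle is this last orientation/substitution step: in the general (genus~$1$) setting $\mathcal{M}_z$ is a non-trivial component of a quartic curve, not a circle, and one must verify the intrinsic identity $w'(\bar t)\,\textnormal{d}\bar t = w'(t)\,\textnormal{d}t$ on $\mathcal{M}_z$ and control the branch of $Y_0$ continuously along both halves of the curve. The remaining analytic ingredients --- existence and uniqueness of a CGF $w$ with a single simple pole at $t=0$ for the domain $\mathscr{G}(\mathcal{M}_z)$, together with the H\"older regularity of $G(\cdot,z)$ needed to legitimize the Plemelj--Sokhotski formula --- are classical in this context and can be borrowed from \cite{FIM,Ra}.
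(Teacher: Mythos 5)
Your argument is correct and is essentially the paper's own route: the paper states Proposition \ref{prop:expression_Q(x,0,z)} without proof, delegating it to \cite{FI,FIM,Ra}, and the standard derivation there is exactly what you sketch --- transporting the Carleman condition \eqref{eq:BVP-circle} through the CGF $w$ to a jump problem on the segment $w(\mathcal{M}_z)$, solving it by a Cauchy--Plemelj integral normalized by $\widetilde F(\infty)=c(0)Q(0,0,z)$ (since the unique pole of $w$ is at $0$), and then using $w(\bar t,z)=w(t,z)$ together with $w'(\bar t,z)\,\mathrm{d}\bar t=w'(t,z)\,\mathrm{d}t$ on $\mathcal{M}_z$ to fold the two half-contour pieces into the single integral over all of $\mathcal{M}_z$ appearing in \eqref{expression_Q(x,0,z)}. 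The points you leave implicit (H\"older regularity of the datum, boundedness of $F\circ w^{-1}$ near the endpoints of the cut in the Liouville step, and the orientation bookkeeping) are precisely the classical details handled in the cited references, so the sketch stands.
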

Of course, a similar expression could be written for $Q(0,y,z)$.

\subsection{\protect Computation of $Q(0,0,z)$, $Q(1,0,z)$, $Q(0,1,z)$, $Q(1,1,z)$}
\label{sec:computation}
The expression we shall obtain for $Q(0,0,z)$ depends on $c(x)$ or symmetrically on 
$\widetilde c(y)$, see equation  \eqref{def_abc}, in the following respect.

\emph{(a)} Suppose first $c(0)=0$ (this equality holds for the simple walk). Then, as in Section \ref{sec:simple-walk}, we can write 
\begin{equation}
\label{expression_Q(0,0,z)_c(x)=0}
Q(0,0,z)=\lim_{x\to 0}\frac{1}{2\pi i zc(x)}\int_{\mathcal{M}_z} tY_0(t,z)\frac{w'(t,z)}{w(t,z)-w(x,z)}\,\textnormal{d}t.
\end{equation}

\emph{(b)} If $c(0)\neq 0$ and  $c(x)$ is not constant, then it has one or two roots, which are located on the unit circle $\Gamma$, see \eqref{def_abc} and \eqref{def_deltaij}. More precisely,
$c(x)$ takes either of the forms 
\[
x+1, \quad x^2+1, \quad x^2+x+1,
\]
with the respective roots $-1, \{i,\bar{i}\},  \{j,\bar{j}\}$. Let $\widehat x$ denote one of these roots. Then, provided that $\widehat{x}\in\mathscr{G}(\mathcal{M}_z)$, we can write
\begin{equation}
\label{expression_Q(0,0,z)_c(x)neq 0_c-not-constant}
Q(0,0,z)=-\frac{1}{2\pi i z}\int_{\mathcal{M}_z} tY_0(t,z)\frac{w'(t,z)}{w(t,z)-w(\widehat x,z)}\,\textnormal{d}t.
\end{equation}

\emph{(c)} If $c(0)\neq 0$ and $c(x)$ is constant, then we can evaluate the functional equation \eqref{counting_func_eq} at any point $(x,y)$ such that $\vert x\vert\leq 1$, $\vert y\vert\leq 1$ and $K(x,y,z)=0$, to obtain an expression for $Q(0,0,z)$.

The computation of $Q(1,0,z)$ and $Q(0,1,z)$ depends on the position of the point $1$ with respect to  $\mathscr{G}(\mathcal{M}_z)$ and $\mathscr G(\mathcal L_z)$. Indeed, if $1$ belongs to these domains,  then $Q(1,0,z)$ and $Q(0,1,z)$ are simply obtained by evaluating the integral formulations \eqref{expression_Q(x,0,z)} at $x=1$ and $y=1$. 

We now assume that for a given $z$, the point $1$ does not belong to $\mathscr{G}(\mathcal{M}_z)$. Then, by evaluating the functional equation \eqref{counting_func_eq} at $(x,Y_0(x,z))$ and 
 $(X_0(Y_0(x,z),z),Y_0(x,z))$, and by making  the difference of the two resulting relations, we obtain
\begin{equation}
\label{eq:after-two-fe}
     c(x)Q(x,0,z)= c(X_0(Y_0(x,z),z))Q(X_0(Y_0(x,z),z),0,z)+\frac{Y_0(x,z)}{z}[x-X_0(Y_0(x,z),z)].
\end{equation}
The key point is that, for $x\in\mathscr{G}(\mathcal{M}_z)$, the range of $\mathbb C$ through the composite function $X_0(Y_0(x,z),z)$ is $\mathscr{G}(\mathcal{M}_z)$ itself. This fact was proved in \cite[Corollary 5.3.5]{FIM} for $z=1/\vert \mathcal S\vert$, but the line of argument  easily extends to other values of $z$. In particular, to compute the right-hand side of \eqref{eq:after-two-fe}, we can use the expression \eqref{expression_Q(x,0,z)} valid  for any $x$, in particular for $x=1$.

As for $Q(1,1,z)$, we simply use the functional equation \eqref{counting_func_eq}, so that
\begin{equation}
\label{expression_Q(1,1,z)}
     (\vert \mathcal S\vert -1/z)Q(1,1,z) = c(1)Q(1,0,z)+\widetilde c(1)Q(0,1,z)-\delta_{-1,-1}Q(0,0,z)-1/z.
\end{equation}

\subsection{Case of the group of order $4$}
\label{sec:group-4}

For the $19$ walks with a group \eqref{def_generators_group} of order $4$ (the step sets 
$\mathcal S$ for these models have been classified in \cite{BMM}), there are two possible ways to compute the exact expression and the asymptotic of the coefficients of $Q(0,0,z)$, $Q(1,0,z)$, $Q(0,1,z)$ and $Q(1,1,z)$. 

Firstly, we can use Section \ref{sec:CGF-and-BVP}, as for any of the $79$ models. Secondly, the reasoning presented in Section \ref{sec:simple-walk} extends immediately  to the $19$ walks having a group of order $4$. Indeed, since in this case the boundary condition \eqref{eq:BVP-circle} is set on a circle, we can replace $\overline t$ by a simple linear fractional transform of $t$---for example, for the unit circle centered at $0$, $\overline t=1/t$. In addition, it is proved in \cite{FIM} that having a BVP set on a circle is equivalent  for the group to be of order $4$. 

For the simple walk, one can check that these two ways of approach coincide: take $t+1/t$ for the CGF $w(t,z)$ in \eqref{expression_Q(x,0,z)}, then make a partial fraction expansion of $w'(t,z)/[w(t,z)-w(x,z)]$, and this will eventually lead to \eqref{eq:int-circle}. 

\subsection{Singularities of the generating functions} \label{sec:singularities}

As written in the beginning of Section \ref{sec:generalization}, we cannot in this short paper go into deeper detail about the analysis of singularities. Let us simply state that only the real singularities of $Q(0,0,z)$, $Q(1,0,z)$, $Q(0,1,z)$ and $Q(1,1,z)$ play a role. Below, from the expressions of these generating functions obtained in Section \ref{sec:computation}, we just explain the main origin of all possible singularities, and postpone the complete proofs to \cite{FR3}.

Consider first $Q(0,0,z)$. 
\begin{prop}
\label{prop:singularity-Q(0,0,z)}
The smallest positive singularity of $Q(0,0,z)$ is
\begin{equation}
\label{def_z_g}
     z_g = \inf \{ z>0 : y_2(z) = y_3(z)\}.
\end{equation}
\end{prop}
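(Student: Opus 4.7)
The plan is to establish two complementary facts: (a) $Q(0,0,z)$ admits an analytic continuation to the real interval $(0, z_g)$, and (b) $Q(0,0,z)$ develops a genuine singularity at $z = z_g$. Combined with Pringsheim's theorem applied to $Q(0,0,z)$, which has non-negative coefficients, these two facts force the smallest positive singularity to be exactly $z_g$.

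\textbf{Step 1: Analyticity on $(0,z_g)$.} Here I would work from the integral representation of $Q(0,0,z)$ established in Section \ref{sec:computation}, namely \eqref{expression_Q(0,0,z)_c(x)=0} when $c(0)=0$, \eqref{expression_Q(0,0,z)_c(x)neq 0_c-not-constant} when $c(0)\neq 0$ and $c$ is non-constant, and a direct evaluation of \eqref{counting_func_eq} in the remaining constant case. The task is to check that every ingredient varies analytically with $z$ on $(0,z_g)$: the branch points $y_i(z)$ and $x_i(z)$, being simple roots of the discriminants $\widetilde d(y,z)$ and $d(x,z)$, are algebraic functions of $z$ whose analyticity is preserved so long as no pair among them collides; the definition \eqref{def_z_g} of $z_g$ forbids any such collision between $y_2$ and $y_3$ on $(0,z_g)$, and the remaining potentially problematic configurations of branch points can be shown to occur beyond $z_g$. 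The contour $\mathcal M_z$ and the domain $\mathscr G(\mathcal M_z)$ deform continuously with $z$, so Cauchy-type integrals over them inherit analytic dependence on the parameter; the algebraic function $Y_0(t,z)$ remains single-valued and bounded on $\mathcal M_z$; and the conformal gluing function $w(\,\cdot\,,z)$, constructed explicitly in \cite{FIM,Ra}, also varies analytically while the underlying domain varies regularly. Together these facts extend $Q(0,0,z)$ analytically from a right neighborhood of $0$ to the whole of $(0,z_g)$.

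\textbf{Step 2: Singularity at $z=z_g$.} At $z=z_g$ the two branch points $y_2$ and $y_3$ coalesce on the positive real axis; by the common discriminant structure, this collision is mirrored by $x_2(z_g)=x_3(z_g)$, and the genus of the Riemann surface $\{K(x,y,z)=0\}$ drops from $1$ to $0$, as foreshadowed in the introduction. Geometrically, the slit $[y_1(z),y_2(z)]$ on which $Y_0$ ramifies meets the complementary slit $[y_3(z),y_4(z)]$, so the contour $\mathcal M_z$ degenerates and the conformal gluing structure supporting the BVP breaks down. To convert this into a genuine singularity of $Q(0,0,z)$, I would carry out a local Puiseux expansion of $Y_0$ at the double branch point and trace the resulting contribution through \eqref{expression_Q(x,0,z)}; exactly as in the model computation of Section \ref{sec:simple-walk} (compare the proof of Proposition \ref{prop:asymp-1_simple_walk}, where $z_g=1/4$ and the expansion \eqref{Q(0,0,z)_after_expansion_3} produces an $(z-1/4)^2\ln(1-4z)$ contribution), this yields a non-removable half-integer power or logarithmic contribution in $(z_g-z)$ at dominant order.

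\textbf{Main obstacle.} Step 1 is essentially bookkeeping, but Step 2 is where the real work lies: the collision of branch points makes the ingredients of the integral formula singular, yet one must still exclude the possibility that cancellations between them eliminate the singularity of the integral itself. The cleanest strategy is the one used for the simple walk, namely to compute the explicit dominant singular term; a more robust alternative is to combine Step 1 (which gives radius of convergence $R\ge z_g$) with an independent lower bound on the exponential growth rate of the excursion numbers $q(0,0,n)$, for instance via the large-deviation estimates of \cite{DW}, to force $R\le z_g$. The detailed local analysis of $Q(0,0,z)$ near $z_g$, together with the analogous discussion for $Q(1,0,z)$, $Q(0,1,z)$ and $Q(1,1,z)$, is postponed to \cite{FR3}.
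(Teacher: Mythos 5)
Your plan is sound and, at the level of detail the paper itself provides (its own proof is explicitly a sketch, with the complete local analysis postponed to \cite{FR3}), it follows essentially the same route: start from the integral representations \eqref{expression_Q(0,0,z)_c(x)=0} and \eqref{expression_Q(0,0,z)_c(x)neq 0_c-not-constant}, check that nothing singular happens in $z$ on $(0,z_g)$, and identify $z=z_g$ --- where $y_2=y_3$, equivalently $x_2=x_3$, and the genus drops from $1$ to $0$ --- as the first singular point. Your two additions (the explicit Pringsheim framing, and the fallback of importing the exponential growth rate of the excursion numbers from \cite{DW} to force the radius of convergence to equal $z_g$) are legitimate, and the second is in fact consistent with the paper's own remark that $z_g$ coincides with the singularity found in \cite{DW}.

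The one substantive point where your sketch diverges from the paper's is the mechanism producing the singularity at $z_g$ in the general case. You attribute it to the Puiseux expansion of $Y_0$ at the coalescing branch points, as in the simple walk. The paper is careful to distinguish two situations: for the $19$ walks with a group of order $4$, the curve $\mathcal{M}_z$ is a circle and $w(t,z)$ is rational of valuation $1$ in $t$, so it contributes no $z$-singularity and $Y_0$ is indeed the whole story; but for every other model the conformal gluing function $w(t,z)$ is \emph{itself} singular at $z_g$, because the quartic $\mathcal{M}_z$, smooth for $z\in(0,z_g)$, acquires a non-smooth double point at $X(y_2(z),z)$ when $z=z_g$, and the local behavior of $w$ there is governed by the angle between the two tangents at that double point (see \cite{FR2}). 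A local analysis that only expands $Y_0$ would therefore miss a --- possibly dominant --- singular contribution, and the non-cancellation issue you rightly single out as the main obstacle must be resolved with $w$ taken into account as well; this is precisely the fine analysis deferred to \cite{FR3}.
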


\begin{rem}
\label{rem:singularity-Q(0,0,z)}
We choose to denote the singularity above by $z_g$, as an alternative definition of $z_g$ could be the following: the smallest positive value of $z$ for which the {\em genus} of the algebraic curve $\{(x,y)\in\mathbb C^2: K(x,y,z)=0\}$ jumps from $1$ to $0$. See also Proposition \ref{prop:characterizations-z_g} for other characterizations of $z_g$.
\end{rem}

\begin{proof}[Sketch of the proof of Proposition \ref{prop:singularity-Q(0,0,z)}]
To find the singularities of $Q(0,0,z)$, we start from the expressions obtained in Section \ref{sec:computation}, especially  \eqref{expression_Q(0,0,z)_c(x)=0} and \eqref{expression_Q(0,0,z)_c(x)neq 0_c-not-constant}. 

Consider first the case of the group of order $4$. Then the quartic curve $\mathcal{M}_z$ is a circle for any $z$ (see Section \ref{sec-reduction}), and $w(t,z)$ is a rational function of valuation $1$ (see Section \ref{sec:CGF-and-BVP}). In fact, the singularities come from $Y_0(t,z)$, since the branch points $x_\ell(z)$ appear in the expression of this function, see \eqref{def_d} and \eqref{expression_X_Y}. Hence, the first singularity of $Q(0,0,z)$ is exactly the smallest singularity of the $x_\ell(z)$, and this corresponds to equation \eqref{def_z_g},  see  Proposition \ref{prop:characterizations-z_g} below.

Consider now all the remaining cases (i.e., an infinite group, or a finite group of order strictly larger than $4$). Then the singularity $z_g$ appears not only for the same reasons as above, but also on account of the CGF $w(t,z)$. Indeed, for $z\in (0,z_g)$, the curve $\mathcal{M}_z$ is smooth, while for $z=z_g$ one can show that  $\mathcal{M}_z$ has a non-smooth double point at $X(y_2(z),z)$, see \cite{FIM,FR2}. Accordingly, $w(t,z)$ has a singularity at $z_g$, and the behavior of $w(t,z)$ in the neighborhood of $z_g$ is strongly related to the angle between the two tangents at the double point of the quartic curve, see \cite{FR2}.
\end{proof}

In Proposition \ref{prop:singularity-Q(0,0,z)} and in Remark \ref{rem:singularity-Q(0,0,z)}, we gave two different characterizations of $z_g$. We present hereafter five other ones.
\begin{prop}
\label{prop:characterizations-z_g}
The value of the first singularity $z_g$, introduced in \eqref{def_z_g}, can also be characterized by the five following equivalent statements.
\begin{enumerate}
     \item \label{tcqfp} $z_g = \inf \{ z>0 : x_2(z) = x_3(z)\} = \inf \{ z>0 : y_2(z) = y_3(z)\}$.
     \item $z_g$ is the smallest positive singularity of the branch points $x_\ell(z)$.
     \item $z_g$ is the smallest positive singularity of the branch points $y_\ell(z)$.
     \item \label{fps} $z_g$ is the smallest positive double root of the discriminant $d(x,z)$ considered as a polynomial in $x$.    
 \item \label{dw} $z_g$ is related to the minimizer of the Laplace transform of the $\delta_{i,j}$'s as follows. Define $(\alpha,\beta)$ as the unique solution in $(0,\infty)^2$ of
     \begin{equation}
     \label{eq:def-critical-point}
          \textstyle
          \sum_{-1\leq i,j\leq 1} i \delta	_{i,j} \alpha^i \beta^j = 0,
          \qquad
          \sum_{-1\leq i,j\leq 1} j \delta	_{i,j} \alpha^i \beta^j = 0.
     \end{equation}    
     Then 
     \begin{equation}
     \label{eq:z_g-DW}
          z_g = \frac{1}{\sum_{-1\leq i,j\leq 1} \delta_{i,j} \alpha^i \beta^j}.
     \end{equation}
\end{enumerate}
\end{prop}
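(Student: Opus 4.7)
The plan is to unify the five characterizations through the singular-point structure of the algebraic curve $\mathcal{K}_z \egaldef \{(x,y)\in\mathbb{C}^2 : K(x,y,z)=0\}$. Letting $\chi(x,y) \egaldef \sum_{-1\leq i,j\leq 1}\delta_{i,j} x^i y^j$, one factors $K(x,y,z)=xy[\chi(x,y)-1/z]$, so that a point $(x_0,y_0)$ with $x_0 y_0 \neq 0$ is singular on $\mathcal{K}_z$ if and only if $\chi(x_0,y_0)=1/z$ and $\partial_x\chi(x_0,y_0)=\partial_y\chi(x_0,y_0)=0$. The pivot of the proof is the statement that $z_g$ is the unique positive $z$ for which $\mathcal{K}_z$ carries a singular point in $(0,\infty)^2$; each of items (\ref{tcqfp})--(\ref{dw}) will then reduce to reading off a specific coordinate or scalar attached to this singular point.

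For (\ref{tcqfp}) and (\ref{fps}): the $x_\ell(z)$ being, by construction, the four roots of $d(\cdot,z)$, a positive double root of $d$ in $x$ is precisely a coalescence of some $x_\ell$'s, and by the ordering \eqref{eq:bp1} the first such event as $z$ grows from $0^+$ must be $x_2(z)=x_3(z)$. The symmetric argument via $\widetilde d$ handles the $y$-side collision. To show that both coalescences occur at the same $z$, I would observe that a singular point $(x_0,y_0)\in(0,\infty)^2$ of $\mathcal{K}_z$ yields simultaneously a double root $x_0$ of $d(\cdot,z)$ and a double root $y_0$ of $\widetilde d(\cdot,z)$; conversely, at a double root $x_0$ of $d(\cdot,z)$ the pair $(x_0,\,Y_0(x_0,z))=(x_0,\,Y_1(x_0,z))$ is a singular point of $\mathcal{K}_z$. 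This closes both (\ref{tcqfp}) and (\ref{fps}).

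For items 2 and 3: the branch points $x_\ell(z)$ are branches of the algebraic function defined implicitly by $d(x,z)=0$, and by the implicit function theorem such a function can fail to be locally analytic in $z$ only where $\partial_x d(x,z)=0$ simultaneously with $d(x,z)=0$, i.e., at a double root of $d(\cdot,z)$. Combined with (\ref{fps}), this identifies the smallest positive singularity of the $x_\ell(z)$ with $z_g$; the case of the $y_\ell(z)$ is entirely symmetric.

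Finally, for (\ref{dw}): the system \eqref{eq:def-critical-point} is precisely $\alpha\,\partial_x\chi(\alpha,\beta)=0$ and $\beta\,\partial_y\chi(\alpha,\beta)=0$, i.e., $(\alpha,\beta)$ is a critical point of $\chi$ on $(0,\infty)^2$. Strict convexity of $(s,t)\mapsto\log\chi(e^s,e^t)$ on $\mathbb{R}^2$ ensures this critical point is unique and is the global minimum of $\chi$ on $(0,\infty)^2$, so that $(\alpha,\beta)$ is the singular point of $\mathcal{K}_{1/\chi(\alpha,\beta)}$; by the pivotal statement above, $z_g = 1/\chi(\alpha,\beta)$, which is \eqref{eq:z_g-DW}. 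The main obstacle I foresee is establishing the strict convexity and the existence of $(\alpha,\beta)$ in the \emph{open} positive orthant across all relevant step sets: this requires verifying that $\mathcal S$ is not contained in any affine line and that the drift orientation allows $\chi$ to attain its infimum in the interior rather than on the coordinate axes, a condition that must be handled case by case (or via a uniform non-degeneracy hypothesis excluding the singular walks and a few other pathological configurations among the $79$ models).
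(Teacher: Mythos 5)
Your proposal is correct and follows essentially the same route as the paper: both hinge on the observation that at $z_g$ the coalescing pair $(\alpha,\beta)=(x_2(z_g),y_2(z_g))$ is a singular point of the kernel curve, i.e.\ $K=\partial_x K=\partial_y K=0$ there, which yields \eqref{eq:def-critical-point} and \eqref{eq:z_g-DW}, the remaining items being read off from the coincidence of double roots of $d$ and $\widetilde d$. The only difference is one of packaging: you argue in-house (node/double-root correspondence for the discriminants, implicit function theorem for items 2--3, log-convexity of the Laplace transform for uniqueness) what the paper delegates to \cite{FIM} and \cite{KuRa}, and your closing caveat about degenerate step sets matches the paper's implicit restriction to the non-singular models.
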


Before proving Proposition \ref{prop:characterizations-z_g}, we notice at once that  $z_g$ (fortunately!) coincides with the smallest positive singularity of $Q(0,0,z)$ found in \cite[Section 1.5]{DW}, which does match \eqref{eq:def-critical-point} and \eqref{eq:z_g-DW}. 

Secondly, an easy consequence of any of the five points of Proposition \ref{prop:characterizations-z_g} is that $z_g$ is algebraic. Indeed, point \ref{fps} implies that this degree of algebraicity is at most $7$, and can in fact be strictly smaller than $7$, as implied by the following corollary of Proposition \ref{prop:characterizations-z_g} \ref{dw}.

\begin{coro}
\label{coro:drift-0}
We have $z_g = 1/\vert \mathcal S\vert$ if and only if $\sum_{-1\leq i\leq 1}i\delta_{i,j}=\sum_{-1\leq j\leq 1}j\delta_{i,j}=0$ (this happens for $14$ models, according to the classification proposed in \cite{BMM}). Otherwise, $z_g>1/\vert \mathcal S\vert$.
\end{coro}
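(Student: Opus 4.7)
The plan is to derive the corollary as an immediate consequence of characterization \ref{dw} of Proposition \ref{prop:characterizations-z_g}. Set
\[
     L(x,y) = \sum_{-1\leq i,j\leq 1} \delta_{i,j}\, x^i y^j,
\]
so that $z_g = 1/L(\alpha,\beta)$ with $(\alpha,\beta)\in(0,\infty)^2$ determined by \eqref{eq:def-critical-point}.

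First I would observe that $L$, viewed as a function of $(u,v) = (\log x, \log y) \in \mathbb{R}^2$, is a positive linear combination of exponentials, hence smooth and convex. In particular, any critical point of $L$ on $(0,\infty)^2$ is automatically a global minimum. The critical-point equations $x\partial_x L = 0$ and $y\partial_y L = 0$ are precisely \eqref{eq:def-critical-point}, and since characterization \ref{dw} asserts uniqueness of the solution, $(\alpha,\beta)$ is the unique global minimizer of $L$ on $(0,\infty)^2$.

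Consequently $L(\alpha,\beta) \leq L(1,1) = |\mathcal{S}|$, and by \eqref{eq:z_g-DW} this gives $z_g \geq 1/|\mathcal{S}|$, with equality if and only if $(1,1)$ is itself the minimizer, namely $(\alpha,\beta) = (1,1)$. Plugging $(x,y) = (1,1)$ into \eqref{eq:def-critical-point}, we see that this occurs precisely when $\sum_{i,j} i\,\delta_{i,j} = 0$ and $\sum_{i,j} j\,\delta_{i,j} = 0$, which is exactly the zero-drift condition in the statement. Otherwise, uniqueness of the minimizer forces $L(\alpha,\beta) < |\mathcal{S}|$ strictly, and hence $z_g > 1/|\mathcal{S}|$.

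There is no serious obstacle: the main work has already been done in Proposition \ref{prop:characterizations-z_g} \ref{dw}, and the only point to verify is the routine fact that a smooth convex function on $(0,\infty)^2$ with a unique critical point attains its global minimum at that point. The count of $14$ zero-drift step sets then follows by inspection of the tables in \cite{BMM}.
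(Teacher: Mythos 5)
Your argument is correct and is exactly the route the paper intends: the paper states this corollary as an immediate consequence of point \ref{dw} of Proposition \ref{prop:characterizations-z_g}, i.e., of the interpretation of $(\alpha,\beta)$ as the minimizer of the Laplace transform, so that $z_g=1/L(\alpha,\beta)\geq 1/L(1,1)=1/\vert\mathcal S\vert$ with equality precisely in the zero-drift case. Your convexity justification (critical point of a positive combination of exponentials in logarithmic variables is the global minimum, and $(1,1)$ is critical iff the drift vanishes) is the standard fleshing-out of the step the paper leaves implicit.
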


\begin{proof}[of Proposition \ref{prop:characterizations-z_g}]
Only point \ref{dw} has to be shown, since the others are easy by-products of the definition 
\eqref{def_z_g} of $z_g$. For this purpose, we use (see \eqref{def_z_g} and Proposition 
\ref{prop:characterizations-z_g} \ref{tcqfp}) the equalities 
\begin{equation*}
     z_g = \inf \{ z>0 : y_2(z) = y_3(z)\} = \inf \{ z>0 : x_2(z) = x_3(z)\}.
\end{equation*}
Then we have (see, e.g., \cite[Chapter 6]{FIM} or \cite[Section 5]{KuRa})
\begin{equation*}
     X(y_2(z_g),z_g) = x_2(z_g),
     \qquad
     Y(x_2(z_g),z_g) = y_2(z_g).
\end{equation*}
Hence, the pair $(\alpha,\beta) = (x_2(z_g),y_2(z_g))$ satisfies the system   
\begin{equation*}
\label{eq:}
   K(\alpha,\beta,z_g) =  \frac{\partial}{\partial x} K(\alpha,\beta,z_g) = \frac{\partial}{\partial y} K(\alpha,\beta,z_g) = 0,
\end{equation*}
which in turn yields  \eqref{eq:def-critical-point}, while \eqref{eq:z_g-DW} is a direct consequence of $K(\alpha,\beta,z_g)=0$. To see why \eqref{eq:def-critical-point} has a unique solution in $(0,\infty)^2$, we refer for instance to \cite[Section 5]{KuRa}. 

Since claims \ref{dw} and \ref{tcqfp} are equivalent to the fact that both discriminants $d$ and 
$\widetilde{d}$ have a double root, the proof of the proposition is concluded.
\end{proof}

In \cite{FR3}, while proving Proposition \ref{prop:singularity-Q(0,0,z)}, we shall besides be able to obtain the precise behavior of $Q(0,0,z)$ near $z_g$. To this end, we use and extend results of \cite{FR2,Ra} to derive some fine properties of the CGF, when the Riemann surface passes  from genus $1$ to genus $0$.  

As for the singularities of $Q(1,0,z)$ and $Q(0,1,z)$, a key point is to locate the point $1$ with respect to the domains $\mathscr{G}(\mathcal{M}_z)$ and $\mathscr{G}(\mathcal{L}_z)$. Indeed, the expressions of $Q(1,0,z)$ and $Q(0,1,z)$ written in Section \ref{sec:computation} depend on this location.

If $1$ belongs to the sets above for any $z\in(0,z_g)$, then $Q(1,0,z)$ and $Q(0,1,z)$ are simply obtained by evaluating the integrals in Proposition \ref{prop:expression_Q(x,0,z)} at $x=1$ and $y=1$. The first singularity of these functions is then $z_g$ again. 

Assume now that for some $z\in(0,z_g)$, $1$ does not belong to $\mathscr{G}(\mathcal{M}_z)$. Then we use \eqref{eq:after-two-fe} to define $Q(1,0,z)$. Then the singularities of $Q(1,0,z)$ have to be sought among $z_g$ and the singularities of $Y_0(1,z)$ and $X_0(Y_0(1,z),z)$. But $X_0(Y_0(1,z),z)$ is either equal to $1$, or to $\widetilde c(Y_0(1,z))/\widetilde a(Y_0(1,z))$, see \cite[Corollary 5.3.5]{FIM}, and accordingly is either regular or has the same singularities as $Y_0(1,z)$.

Equations \eqref{def_abc}, \eqref{def_d} and \eqref{expression_X_Y} imply that the singularities of $Y_0(1,z)$ necessarily satisfy $d(1,z)=0$. As a consequence, the smallest positive singularity of $Y_0(1,z)$ is given by
\begin{equation*}
     z_{Y}=\frac{1}{b(1)+2\sqrt{a(1)c(1)}}.
\end{equation*} 

\begin{lem}
$z_Y\in[1/\vert\mathcal S\vert,z_g]$. 
\end{lem}

\begin{proof}
The inequality $z_Y\geq 1/\vert\mathcal S\vert$ is a consequence of the relations 
\[
2\sqrt{a(1)c(1)}\leq a(1)+c(1), \quad a(1)+b(1)+c(1) = \vert\Sc\vert. 
\]
Also, $z_Y$ satisfies $d(1,z_Y)=0$. In other words, $z_Y$ is the smallest positive value of $z$ such that $1$ is a root of $d(x,z)$. But we have $x_2(0)=0$, $x_3(0)=\infty$, and $x_2(z_g) = x_3(z_g)$, see Proposition \ref{prop:characterizations-z_g}. Hence, by a continuity argument, one of the points $x_2(z)$ or $x_3(z)$ must become $1$ before reaching  the other branch point.
\end{proof}

Similarly, we obtain that $Q(0,1,z)$ has a singularity at $z_g$, and possibly at 
\begin{equation*}
     z_{X}=\frac{1}{\widetilde b(1)+2\sqrt{\widetilde a(1)\widetilde c(1)}}.
\end{equation*} 

Concerning the algebraicity of $z_Y$ and $z_X$, we simply note that these numbers are either rational or algebraic of degree two, if and only if $a(1)c(1)$ and $\widetilde a(1)\widetilde c(1)$ are not square numbers.

As for $Q(1,1,z)$, once we know the singularities of $Q(0,0,z)$, $Q(1,0,z)$ and 
$Q(0,1,z)$, it is immediate  to compute those of $Q(1,1,z)$ by means of  equation \eqref{expression_Q(1,1,z)}.

\begin{rem}[Conclusion]
As we have seen, a consequence of the various results of Section \ref{sec:generalization} is that the smallest positive singularities of $Q(0,0,z)$, $Q(1,0,z)$, $Q(0,1,z)$ and $Q(1,1,z)$ are algebraic, and sometimes even rational, for all models of walks. In addition, we have the following classification of the singularities. For each of the $74$ non-singular models, let us introduce the \emph{mean drift vector} 
\[
\overrightarrow{M} = (M_x, M_y),
\]
with 
\begin{equation*}
    M_x =  \sum_{-1\leq i,j\leq 1}i\delta_{i,j}, \quad M_y = \sum_{-1\leq i,j\leq 1}j\delta_{i,j} ,
\end{equation*}
and the \emph{covariance} 
\begin{equation*}
     C = \sum_{-1\leq i,j\leq 1}ij\delta_{i,j} - M_x M_y,
     \end{equation*}
 where the $\delta_{i,j}$ have been introduced in \eqref{def_deltaij}.
The table below gives precisely the smallest positive singularities of each generating function $Q(1,0,z)$, $Q(0,1,z)$ and $Q(1,1,z)$ in terms of the sign of the coordinates of the drift vector and the covariance. We write ``FS'' to mean  ``first positive singularity''.

\setlength{\doublerulesep}{\arrayrulewidth}
\begin{center}
\begin{tabular}{|||c|||p{8mm}|p{8mm}|p{8mm}|||p{8mm}|p{8mm}|p{8mm}|||p{8mm}|p{8mm}|p{8mm}|p{8mm}|||}
\hline\hline\hline
& \multicolumn{3}{|c|||}{\text{FS of } $Q(1,0,z)$:} & \multicolumn{3}{|c|||}{\text{FS of } $Q(0,1,z)$:}& \multicolumn{4}{c|||}{\text{FS of } $Q(1,1,z)$:} \\ 
  \hline
\text{Drift:}& \multicolumn{1}{|c|}{$z_g$} & \multicolumn{1}{|c|}{$z_Y$} & \multicolumn{1}{|c|||}{$1/\vert \mathcal S\vert $} & \multicolumn{1}{|c|}{$z_g$} & \multicolumn{1}{|c|}{$z_X$} & \multicolumn{1}{|c|||}{$1/\vert \mathcal S\vert$} & \multicolumn{1}{|c|}{$z_g$} & \multicolumn{1}{|c|}{$z_X$} & \multicolumn{1}{|c|}{$z_Y$} & \multicolumn{1}{|c|||}{$1/\vert\mathcal S\vert$} \\
  \hline\hline\hline
  $(+,+)$&  & \multicolumn{1}{|c|}{ {\large $\times$}}& & & \multicolumn{1}{|c|}{ {\large $\times$}} & & & &  & \multicolumn{1}{|c|||}{ {\large $\times$}} \\
  \hline
    $(+,0)$&  & \multicolumn{2}{|c|||}{ {\large $\times$}} &\multicolumn{1}{|c|}{{\footnotesize $C\geq0$}}  &\multicolumn{1}{|c|}{{\footnotesize $C\leq0$}}   & & & &  & \multicolumn{1}{|c|||}{ {\large $\times$}} \\
  \hline
    $(0,+)$&\multicolumn{1}{|c|}{{\footnotesize $C\geq0$}}  & \multicolumn{1}{|c|}{{\footnotesize $C\leq0$}} & & & \multicolumn{2}{|c|||}{ {\large $\times$}} & & &  &\multicolumn{1}{|c|||}{ {\large $\times$}} \\
  \hline
    $(0,0)$& \multicolumn{3}{|c|||}{ {\large $\times$}} & \multicolumn{3}{|c|||}{ {\large $\times$}} & \multicolumn{4}{|c|||}{ {\large $\times$}} \\
  \hline
    $(+,-)$&  & \multicolumn{1}{|c|}{ {\large $\times$}} & & \multicolumn{1}{|c|}{ {\large $\times$}}&  & & & & \multicolumn{1}{|c|}{ {\large $\times$}} &  \\
  \hline
    $(-,+)$& \multicolumn{1}{|c|}{ {\large $\times$}} &  & & & \multicolumn{1}{|c|}{ {\large $\times$}} & & &\multicolumn{1}{|c|}{ {\large $\times$}} &  & \\
  \hline
    $(0,-)$& \multicolumn{1}{|c|}{{\footnotesize $C\leq0$}} & \multicolumn{1}{|c|}{{\footnotesize $C\geq0$}} & & \multicolumn{1}{|c|}{ {\large $\times$}} &  &  &  \multicolumn{1}{|c|}{{\footnotesize $C\leq0$}} & \multicolumn{1}{|c|}{{\footnotesize $C\geq0$}} &  & \\
  \hline
    $(-,0)$& \multicolumn{1}{|c|}{ {\large $\times$}}   &  & &   \multicolumn{1}{|c|}{{\footnotesize $C\leq0$}}  &   \multicolumn{1}{|c|}{{\footnotesize $C\geq0$}}  & & \multicolumn{1}{|c|}{{\footnotesize $C\leq0$}} & \multicolumn{1}{|c|}{{\footnotesize $C\geq0$}} &  & \\
  \hline
    $(-,-)$& \multicolumn{1}{|c|}{ {\large $\times$}}  &  & & \multicolumn{1}{|c|}{ {\large $\times$}}  &  & & \multicolumn{1}{|c|}{ {\large $\times$}}  & &  & \\
  \hline\hline\hline
\end{tabular}
\end{center}
\end{rem}
\acknowledgments
This paper is dedicated to the memory of  Philippe Flajolet, who was deeply interested in these problems, about which we had together enjoyable preliminary discussions. This  brutal stroke of fate prevented our friend Philippe from becoming the third co-author\ldots 

\bibliographystyle{abbrvnat}

\end{document}